\newtheorem{theorem}{Theorem}
\newtheorem{corollary}[theorem]{Corollary}
\newtheorem{lemma}[theorem]{Lemma}
\newtheorem{definition}[theorem]{Definition}
\newtheorem{conjecture}[theorem]{Conjecture}
\newtheorem{question}{Question}
\newenvironment{proof}{\paragraph{Proof}}{\hfill$\square$\medskip}
\algnewcommand\algorithmicforeach{\textbf{for each}}
\begin{document}

\title{Three aspects of the MSTCI problem}

\author[1,2]{Manuel Dubinsky\corref{cor1}}
 \ead{mdubinsky@undav.edu.ar}
 
\author[3]{César Massri \fnref{fn1}}
 
 \author[4]{Gabriel Taubin \fnref{fn2}}
 
\cortext[cor1]{Corresponding author}
\fntext[fn1]{Massri was supported by Instituto de Investigaciones Matemáticas ``Luis A. Santal\'o'', UBA, CONICET, CABA, Argentina}
\fntext[fn2]{Taubin was partially supported by NSF grant number IIS-1717355.}

\address[1]{Ingeniería en Informática, Departamento de Tecnología y Administración, Universidad Nacional de Avellaneda, Argentina}
\address[2]{Departamento de Computación, Facultad de Ciencias Exactas y Naturales, Universidad de Buenos Aires, Argentina}
\address[3]{Departamento de Matemática, Universidad de CAECE, CABA, Argentina}
\address[4]{School of Engineering, Brown University, Providence, RI, USA}

\begin{abstract}
Consider a connected graph $G$ and let $T$ be a spanning tree of $G$. 
Every edge $e \in G-T$ induces a cycle in $T \cup \{e\}$. The 
intersection of two distinct cycles is the set of edges of 
$T$ they have in common. The MSTCI problem consists in finding a spanning 
tree  that has the least number of such non-empty intersections and the  
\emph{intersection number} is the number of non-empty intersections of a 
solution. In this article we consider three aspects of the problem in a general 
context (i.e. for arbitrary connected graphs). The first presents two lower 
bounds of the intersection number. The second compares the intersection number 
of graphs that differ in one edge. The last one is an attempt to generalize a 
recent result for graphs with a universal vertex.
\end{abstract}

\begin{keyword}
Graphs; Spanning trees; Cycle bases
\end{keyword}

\maketitle

\section{Introduction}

Let $T$ be a spanning tree of $G$. Every 
edge $e \in G-T$ induces a cycle in $T \cup \{e\}$. The intersection of two 
distinct cycles is the set of edges of $T$ they have in common. 
The MSTCI problem consists in finding a spanning tree that has the least 
number of such pairwise non-empty intersections. The \emph{intersection number} 
is the number of non-empty intersections of a solution.

\medskip

The theoretical interest, as well as an application of 
the MSTCI problem, is described in 
\cite{Dubinsky2021}. A conjecture regarding graphs 
that contain a \emph{universal vertex} was recently 
proved in \cite{CHEN202219}.

\medskip

The \emph{intersection number} of $G$ is connected 
with the sparsity of the \emph{Grammian matrix} of 
a cycle matrix. Let $B = (C_1,\dots,C_\nu)$ be a 
cycle basis with corresponding cycle matrix 
$\Gamma$. The Grammian of $\Gamma$ is  
$\hat{\Gamma} = \Gamma^t \Gamma$. We will denote 
$\hat{\Gamma}$ the \emph{cycle intersection matrix} of $B$. 
It is easy to check that the $ij$-entry of $\hat{\Gamma}$ is 
0 if and only if the cycles $C_i$ and $C_j$ do not intersect 
(i.e.: they have no edges in common). 

\medskip

The MSTCI problem corresponds to the particular case of bases
 that belong to the strictly fundamental class \cite{Kavitha:2009}, 
 i.e. bases induced by a spanning tree. The 
 complexity class of this problem is unknown. In 
 this context it seems natural to consider the 
 problem of finding a lower bound 
 for the \emph{intersection number}.

 \medskip

The article presents a reflection of the MSTCI problem for 
arbitrary connected graphs. It is structured as follows: 
Section 2 sets some notation, convenient 
definitions and auxiliary results. 
Section 3 focuses on the theoretical and practical 
interest of two lower bounds of the intersection number. Section 4 
compares the intersection number of graphs that differ in one edge. 
Section 5 attempts to generalize a recent result for 
graphs with a universal vertex. Section 6 briefly describes the 
technical details of the experiments. Finally, Section 7 
collects the conclusions.

\section{Preliminaries}
\subsection{Overview}

In the first part of this section we present some 
of the terms and the notation used in the article. 
In the second part we present some lemmas 
required in the following sections.

\subsection{Notation}
Let $G=(V,E)$ be a graph and $T$ a spanning tree of $G$. 
The number of vertices and edges will be  
$|V| = n$ and $|E| = m$, resp. The number of 
cycles of a cycle 
basis is $\mu = m - n + 1$, known as the 
\emph{cyclomatic number} of $G$. The 
unique path between $u,v \in V$ in the spanning tree $T$, 
will be denoted $uTv$; max-deg(G) (resp. max-deg(T)) 
and min-deg(G) (resp. min-deg(T)) will be the maximum and minimum 
degree of a node of $G$ (resp. $T$) 

\medskip

We will 
refer to the edges $e\in T$ as 
\emph{tree-edges} and to the edges 
in $G-T$ as \emph{cycle-edges}.
Every cycle-edge $e$ induces a cycle in $T \cup 
\{e\}$, which we will call a \emph{tree-cycle}. 
We will denote $\cap_G(T)$ to 
the number of non-empty tree-cycle pairwise 
intersections w.r.t. $T$, being $\cap(G) = 
\cap_G(T)$ the \emph{intersection number} 
of $G$ in the case where $T$ is a solution of the 
MSTCI problem. 

\medskip

We shall call \emph{star} spanning tree to one that 
has one vertex that connects to all other vertices, and $K_n$ to the 
complete graph on $n$ nodes. A \emph{universal 
vertex} $u$ of $G$ is a vertex incident to 
all the other vertices, i.e. has $max-deg(G) = n - 1$. $G$ is a \emph{regular graph}  
if all its vertices have the same degree. 

\medskip

Finally we will refer to the terms \emph{vertex} 
and \emph{node} interchangeably.

\subsection{Auxiliary lemmas and definitions}

In this section we present some auxiliary lemmas 
and definitions. In the following, let $G=(V,E)$ be a 
connected graph and $T = (V, E')$ a spanning tree of $G$.

\bigskip

\begin{definition} \cite{Bondy2008}
Let $e \in E'$ be a tree-edge, consider the two 
connected components $T_1$ and $T_2$ determined 
by $T - \{e\}$, denote the 
\emph{bond} $b_e$  
as the maximal set of edges $(v,w) \in E$ such 
that $v \in T_1$ and $w \in T_2$. 
\end{definition} 

\begin{lemma}\label{lemma:bond_pairwise_intersect}
Let $e \in E'$ be an edge of $T$ and $b_e$ its 
corresponding bond, then the edges in $b_e - \{e\}$ are 
cycle-edges that determine tree-cycles that intersect pairwise.
\end{lemma}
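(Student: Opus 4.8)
The plan is to establish two facts: first, that every element of $b_e - \{e\}$ is genuinely a cycle-edge, and second, that the tree-cycle it induces necessarily contains the tree-edge $e$. The pairwise intersection claim is then immediate, since all of these tree-cycles share the tree-edge $e$, and by definition the intersection of two tree-cycles is the set of tree-edges common to both.

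First I would note that $e$ itself belongs to $b_e$, since its two endpoints lie in $T_1$ and $T_2$ respectively, so removing it from $b_e$ is meaningful. Now fix an arbitrary $f \in b_e - \{e\}$, say $f = (v,w)$ with $v \in V(T_1)$ and $w \in V(T_2)$. If $f$ were a tree-edge, then $T - \{e\}$ would still contain an edge joining a vertex of $T_1$ to a vertex of $T_2$, contradicting the fact that $T_1$ and $T_2$ are (distinct) connected components of $T - \{e\}$. Hence $f \in G - T$ is a cycle-edge and induces a tree-cycle in $T \cup \{f\}$, namely $\{f\}$ together with the edges of the unique tree-path $vTw$.

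Next I would analyze that tree-path. Since $v \in V(T_1)$ and $w \in V(T_2)$, the path $vTw$ must use some edge of $T$ that crosses the cut $\bigl(V(T_1), V(T_2)\bigr)$; but $e$ is the only edge of $T$ crossing that cut, again because $T_1$ and $T_2$ are exactly the two components of $T - \{e\}$. Therefore $e$ is an edge of $vTw$, so the tree-cycle induced by $f$ contains $e$. As $f$ was arbitrary in $b_e - \{e\}$, every tree-cycle arising this way contains $e$, and consequently any two of them intersect in a set that contains the tree-edge $e$ and is thus non-empty.

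I do not anticipate a substantial obstacle; the only step that deserves a word of care is the path argument, i.e. that a path in $T$ between two vertices lying in different components of $T - \{e\}$ is forced to traverse $e$. This follows at once from $T$ being a tree (so the path between any two vertices is unique) together with $e$ being a cut-edge of $T$ separating those two endpoints, so the unique path cannot avoid it.
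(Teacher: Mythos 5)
Your proof is correct and follows essentially the same route as the paper's: observe that the endpoints of any $f\in b_e-\{e\}$ lie in different components of $T-\{e\}$, so the tree-path between them must traverse $e$, whence $f$ is a cycle-edge and all the induced tree-cycles share the tree-edge $e$. Your version merely spells out more explicitly why $f$ cannot be a tree-edge and why the unique tree-path is forced through the cut-edge $e$.
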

\begin{proof}
Let $T_1, T_2$ be the two connected 
components determined by $T-\{e\}$, and let 
$(v,w) \in b_e-\{e\}$. By definition $v \in T_1$ 
and $w \in T_2$, then $vTw$ must contain $e$ and 
$vTw \cup (v,w)$ determines a tree-cycle. This 
implies that $(v,w)$ is a cycle-edge and that the 
edges in $b_e-\{e\}$ intersect pairwise.
\end{proof}

\begin{lemma}\label{lemma:cycle_edge_every_bond}
Let $f = (v,w) \in E-E'$ be a cycle-edge, then $f$ 
belongs to the bonds corresponding to the edges of 
$vTw$.
\end{lemma}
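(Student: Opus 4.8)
The plan is to obtain the lemma directly from two facts: that a tree contains a unique path between any two of its vertices, and the maximality property in the definition of a bond. I would fix an arbitrary tree-edge $e \in E'$ lying on the path $vTw$ and let $T_1$ and $T_2$ be the two connected components of $T - \{e\}$. It then suffices to show that $f \in b_e$, since $e$ is an arbitrary edge of $vTw$.

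First I would argue that the endpoints $v$ and $w$ of $f$ fall in different components of $T - \{e\}$. Because $T$ is a tree, $vTw$ is the unique $v$--$w$ path in $T$, and by hypothesis this path contains $e$. If $v$ and $w$ lay in the same component of $T - \{e\}$, there would be a $v$--$w$ path in $T$ avoiding $e$, contradicting uniqueness; hence, relabelling $T_1$ and $T_2$ if necessary, $v \in T_1$ and $w \in T_2$. The second step is then immediate: $f = (v,w)$ is an edge of $E$ with one endpoint in $T_1$ and the other in $T_2$, so by the maximality in the definition of $b_e$ we conclude $f \in b_e$.

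I do not expect any genuine obstacle here; the statement is essentially the complementary viewpoint to Lemma~\ref{lemma:bond_pairwise_intersect}, and the only point that deserves to be spelled out is the separation of $v$ and $w$ in $T-\{e\}$, which rests solely on the uniqueness of paths in a tree.
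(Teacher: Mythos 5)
Your argument is correct and follows the same route as the paper's proof: fix an edge $e$ of $vTw$, observe that $v$ and $w$ land in different components of $T-\{e\}$, and invoke the maximality in the definition of $b_e$. The only difference is that you justify the separation of $v$ and $w$ via uniqueness of tree paths, a step the paper dismisses as ``clearly''; your version is simply more explicit.
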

\begin{proof}
Let $e \in vTw$, consider the two connected 
components $T_1, T_2$ determined by $T - \{e\}$, 
clearly $v \in T_1$ and $w \in T_2$ then by 
definition $f \in b_e$. 
\end{proof}

\begin{corollary}\label{coro:cycle_edge_2_bonds}
Every cycle-edge belongs to at least two bonds.
\end{corollary}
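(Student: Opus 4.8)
The plan is to reduce the corollary directly to Lemma~\ref{lemma:cycle_edge_every_bond}. Let $f = (v,w)$ be any cycle-edge. Since $f \in E - E'$, the endpoints $v$ and $w$ are \emph{not} joined by an edge of $T$, so the tree path $vTw$ cannot consist of a single edge; being a path between two distinct vertices, it therefore contains at least two distinct tree-edges, say $e_1$ and $e_2$. By Lemma~\ref{lemma:cycle_edge_every_bond}, $f$ belongs to $b_{e_1}$ and to $b_{e_2}$, so it only remains to check that these two bonds are genuinely different.

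For that step I would observe that a tree-edge always lies in its own bond: removing $e_i$ from $T$ separates the endpoints of $e_i$, hence $e_i \in b_{e_i}$. On the other hand $e_1 \notin b_{e_2}$, because deleting $e_2 \neq e_1$ from $T$ leaves $e_1$ in place, so the endpoints of $e_1$ remain in the same connected component of $T - \{e_2\}$. Thus $e_1$ distinguishes $b_{e_1}$ from $b_{e_2}$, and $f$ lies in at least two distinct bonds, as claimed.

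I do not expect any real obstacle here; the only point requiring a moment's care is the assertion that $vTw$ has at least two edges, which is precisely where the hypothesis that $f$ is a cycle-edge (and not a tree-edge) enters. In fact the same reasoning yields the sharper statement that $f$ lies in exactly as many bonds as there are edges on the path $vTw$, namely the bonds indexed by those tree-edges; the corollary as stated follows by noting that this number is at least two.
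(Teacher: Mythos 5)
Your proof is correct and follows essentially the same route as the paper: show that the tree path $vTw$ has at least two edges and invoke Lemma~\ref{lemma:cycle_edge_every_bond}. The only differences are cosmetic --- you justify the length of $vTw$ via $f\notin E'$ rather than via the fact that every cycle has at least three edges, and you make explicit the (easy) check that distinct tree-edges yield distinct bonds, which the paper leaves implicit.
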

\begin{proof}
Let $f = (v,w) \in E-E'$ be a cycle-edge, 
as every cycle contains at least three edges then 
$vTw$ has at least length two. By the previous 
lemma the claim follows. 
\end{proof}

Let $B_T = \{b_e\}_{e \in E'}$ be the set of bonds 
w.r.t. $T$, then -by Lemma 
\ref{lemma:bond_pairwise_intersect}- 
computing the total number of pairs of cycle-edges 
in $B_T$ overestimates $\cap_G(T)$

$$\sum_{e \in E'} \binom{|b_e| - 1}{2} \geq 
\cap_G(T)$$

The overestimation is due to redundant 
intersections. Note that if the intersection of 
two tree-cycles $c_1, c_2$ contains two or more 
tree-edges, then the corresponding pair of 
cycle-edges will be contained in more than one 
bond. The next lemmas address this problem at 
the cost of incurring in underestimation.

\begin{lemma}\label{lemma:bond_unique}
Let $B_T = \{b_e\}_{e \in E'}$ be the set of bonds 
w.r.t. $T$, then another set $\bar B_T = 
\{\bar b_e\}_{e \in E'}$ exists such that:

\begin{itemize}
	\item $\bar b_e \subseteq b_e$
	\item every edge $e \in E$ belongs to exactly one $\bar b_e$
\end{itemize}
\end{lemma}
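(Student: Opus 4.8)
The plan is to construct the refined bonds $\bar b_e$ by a greedy assignment procedure that distributes each edge of $E$ to exactly one bond while respecting the containment $\bar b_e \subseteq b_e$. The key structural fact I would exploit is the one recorded in Lemma~\ref{lemma:cycle_edge_every_bond}: a cycle-edge $f=(v,w)$ belongs precisely to the bonds $b_e$ with $e \in vTw$, and a tree-edge $e_0$ belongs to $b_{e_0}$ (and, one checks, to no other $b_e$, since removing any other tree-edge leaves $e_0$ inside one of the two components). So tree-edges have no choice in the assignment — set $e_0 \in \bar b_{e_0}$ — and the whole problem reduces to choosing, for each cycle-edge $f$, one tree-edge on its tree-path $vTw$ to be "responsible" for $f$.

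**Carrying out the construction:**

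First I would root $T$ at an arbitrary vertex $r$, which orients every tree-edge toward its endpoint farther from $r$ and lets me speak of the edge $e_v$ joining a non-root vertex $v$ to its parent. For a cycle-edge $f=(v,w)$, the path $vTw$ passes through the lowest common ancestor $\mathrm{lca}(v,w)$; I would assign $f$ to the bond of, say, the tree-edge $e_v$ incident to the endpoint of $f$ that lies deeper in $T$ (breaking ties, when both endpoints are at equal depth, by a fixed vertex order). Concretely, define $\bar b_e$ for $e = e_v$ to be $\{e_v\}$ together with all cycle-edges $f$ whose assigned tree-edge is $e_v$. Then I would verify the two bullet points: (i) $\bar b_e \subseteq b_e$ holds because the chosen tree-edge $e_v$ always lies on $vTw$, hence $f \in b_{e_v}$ by Lemma~\ref{lemma:cycle_edge_every_bond}, and the tree-edge $e_v$ itself lies in $b_{e_v}$ by definition of bond; (ii) every edge of $E$ lies in exactly one $\bar b_e$ because each tree-edge is placed in its own unique bond and each cycle-edge receives exactly one assignment by construction.

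**The main obstacle:**

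I expect the only real subtlety — and the step I would be most careful about — is pinning down that a tree-edge $e_0$ belongs to the bond $b_e$ for no tree-edge $e \neq e_0$, so that the forced assignment of tree-edges creates no conflict; this follows because $T-\{e\}$ has $e_0$ sitting entirely within one of its two components whenever $e\neq e_0$, but it is worth stating explicitly since the lemma's second bullet quantifies over all of $E$, not just cycle-edges. Beyond that, the argument is essentially bookkeeping: the assignment rule is a well-defined function on cycle-edges (every tree-cycle has length $\geq 2$, so $vTw$ is nonempty and a choice always exists — indeed Corollary~\ref{coro:cycle_edge_2_bonds} guarantees at least two candidates), and disjointness of the $\bar b_e$ is immediate from the fact that each edge is handed to a single bond. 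No optimization or extremal argument is needed here; the lemma only asserts existence of such a system of representatives, and the rooted-tree assignment furnishes one cleanly.
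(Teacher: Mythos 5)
Your proof is correct and rests on the same underlying fact as the paper's: every edge of $E$ lies in at least one bond (tree-edges in their own, cycle-edges in those along their tree-path), so one can select a single containing bond for each edge. The paper simply deduplicates arbitrarily ("remove all but one occurrence of each duplicated edge"), whereas you supply an explicit canonical selection rule via a rooted tree; this is more constructive than needed but proves the same statement the same way.
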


\begin{proof}
As $E$ is finite, then the bonds of $T$ and 
$B_T$ exist and are finite. In order to define a set 
$\bar B_T$ that meets the conditions, 
suffices to remove all but one occurrence 
of the duplicated edges in the bonds of $B_T$. 
% Consider the following algorithm:

% \begin{algorithm}[H]
%     \caption{NonRedundantBondSet($G, T$)}
% 	\begin{algorithmic}
% 		\State $B_T \gets BuildBondSet(G,T)$
% 		\State $\bar B_T \gets B_T$
% 		\ForEach {bond $\bar b_e \in \bar B_T$}
% 			\ForEach {edge $f \in \bar b_e$}
% 				\If{$f \ is \ marked$}
% 					\State $Remove \ f \ from \ \bar b_e$
% 				\Else
% 					\State $Mark(f)$
% 				\EndIf
% 			\EndFor
% 		\EndFor
% 	\end{algorithmic}
% 	\label{alg:alg1}
% \end{algorithm}

% This algorithm removes all the redundant 
% occurrences of the edges in the bonds of $\bar B_T$. 
% Clearly $\bar b_e \subseteq b_e$. 
\end{proof}

This result motivates the following definition.

\begin{definition} 
Let $B_T = \{b_e\}_{e \in E'}$ be 
the set of bonds w.r.t. $T$, then a \emph{non 
redundant bond set} is a set $\bar B_T = 
\{\bar b_e\}_{e \in E'}$ such that:
\begin{itemize}
	\item $\bar b_e \subseteq b_e$
	\item every edge $e \in E$ belongs to exactly one $\bar b_e$
\end{itemize}
\end{definition}
	
\begin{lemma}\label{lemma:bound}
Let $T$ be a solution for the MSTCI problem 
w.r.t. $G$, $\bar B_T = \{\bar b_e\}_{e \in E'}$ a non 
redundant bond set and $\phi_e = |\bar b_e| - 1$ for 
every $e \in E'$, then 

\begin{itemize}
	\item $\sum_{e \in E'} \binom{\phi_e}{2} \leq \cap(G)$
	\item $\mu = \sum_{e \in E'} \phi_e$
\end{itemize}
\end{lemma}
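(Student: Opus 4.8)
The plan is to unpack both statements from the two defining properties of a non redundant bond set, using Lemma~\ref{lemma:bond_pairwise_intersect} for the inequality and a counting argument for the equality. For the first bullet, I would fix a non redundant bond set $\bar B_T = \{\bar b_e\}_{e \in E'}$. For each tree-edge $e$, the edges of $\bar b_e - \{e\}$ are, by Lemma~\ref{lemma:bond_pairwise_intersect} (since $\bar b_e \subseteq b_e$), cycle-edges whose tree-cycles pairwise intersect, contributing $\binom{\phi_e}{2}$ pairwise intersections with $\phi_e = |\bar b_e| - 1$. The key point is that these contributions, summed over $e$, do \emph{not} double-count: if a pair of cycle-edges $\{f,g\}$ were counted for two distinct tree-edges $e$ and $e'$, then both $f$ and $g$ would lie in $\bar b_e \cap \bar b_{e'}$, forcing (say) $f$ into two distinct members of $\bar B_T$, contradicting the second defining property. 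Hence $\sum_{e \in E'}\binom{\phi_e}{2}$ counts a set of genuinely distinct pairwise intersections, each of which is a non-empty tree-cycle intersection w.r.t.\ the solution tree $T$; therefore it is at most $\cap_G(T) = \cap(G)$.

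For the second bullet I would count, in two ways, the incidences between cycle-edges and members of $\bar B_T$ after removing the trivial membership of each $e$ in $\bar b_e$. Every cycle-edge $f \in E - E'$ belongs to exactly one member of $\bar B_T$ by the partition property; since $f$ is never equal to the tree-edge indexing that member, it contributes exactly $1$ to $\sum_{e \in E'}(|\bar b_e| - 1) = \sum_{e \in E'}\phi_e$, provided each $\bar b_e$ does contain its own edge $e$. That last fact needs a short justification: $e \in b_e$ always (its endpoints lie in the two components of $T - \{e\}$), and since $e$ belongs to exactly one member of $\bar B_T$, that member must be $\bar b_e$ itself, so $e \in \bar b_e$. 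Thus removing $e$ from each $\bar b_e$ exactly removes the tree-edges, and what remains is a partition of the $\mu = m - n + 1$ cycle-edges; summing the block sizes gives $\sum_{e \in E'}\phi_e = \mu$.

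The main obstacle is the no-double-counting argument for the inequality: one must be careful that ``the pairs counted by $\binom{\phi_e}{2}$'' really are distinct \emph{as pairs of cycle-edges} across different $e$, and that each such pair corresponds to a non-empty intersection that is actually realized by $T$ — both of which follow cleanly once we invoke the partition property of $\bar B_T$ and Lemma~\ref{lemma:bond_pairwise_intersect}. Everything else is bookkeeping: the equality is just ``sum of sizes of the blocks of a partition equals the size of the ground set,'' once we have pinned down that each $\bar b_e$ contains exactly the tree-edge $e$ among the tree-edges. No step requires more than the lemmas already established, so I would keep the write-up short.
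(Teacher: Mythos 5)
Your proposal is correct and follows essentially the same route as the paper: Lemma~\ref{lemma:bond_pairwise_intersect} gives the $\binom{\phi_e}{2}$ intersections per reduced bond, the partition property of $\bar B_T$ prevents double-counting for the inequality, and a counting of cycle-edges over the partition gives the equality. You merely make explicit two points the paper leaves implicit (that a pair of cycle-edges cannot be charged to two different reduced bonds, and that each $\bar b_e$ necessarily contains the tree-edge $e$ itself), which is a faithful filling-in rather than a different argument.
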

\begin{proof}
By Lemma 
\ref{lemma:bond_pairwise_intersect} the edges in 
$\bar b_e - \{e\}$ are cycle-edges that intersect pairwise. 
There are $\phi_e =|\bar b_e| - 1$ 
cycle-edges in $\bar b_e - \{e\}$, then $\bar b_e$ 
accounts for $\binom{\phi_e}{2}$ pairwise 
intersections. Since every edge belongs to exactly 
one \emph{non redundant bond}, every tree-cycle 
pairwise intersection is counted at most 
once, then the first inequality holds. As each bond 
contains exactly $\phi_e$ cycle-edges and every 
cycle-edge belongs to exactly one bond, the second 
equation follows.
\end{proof}

\begin{lemma}\label{lemma:strictly_convex}
A quadratic function $f: \mathbb{R}^n \rightarrow 
\mathbb{R}$ defined as,

$$f(x_1, \dots, x_n) = \bm x^T \bm Q \bm x + \bm c^T \bm x + d$$
is strictly convex if and only if $Q$ is positive definite.
\end{lemma}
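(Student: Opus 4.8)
The plan is to prove both implications directly from the definition of strict convexity, exploiting the key algebraic fact that the only part of $f$ that matters is the quadratic form $\bm x \mapsto \bm x^T \bm Q \bm x$, since the affine part $\bm c^T \bm x + d$ is preserved exactly under convex combinations. Concretely, I would first record the identity
$$\lambda f(\bm x) + (1-\lambda) f(\bm y) - f(\lambda \bm x + (1-\lambda)\bm y) = \lambda(1-\lambda)\,(\bm x - \bm y)^T \bm Q\, (\bm x - \bm y),$$
valid for all $\bm x, \bm y \in \mathbb{R}^n$ and $\lambda \in (0,1)$, which is obtained by expanding both sides and checking that the $\bm c^T \bm x + d$ contributions cancel. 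If $\bm Q$ is not assumed symmetric, one replaces it throughout by $\tfrac12(\bm Q + \bm Q^T)$, which changes neither the quadratic form nor the notion of positive definiteness, so there is no loss in working with symmetric $\bm Q$.

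For the ``if'' direction, assume $\bm Q$ is positive definite. Given distinct $\bm x, \bm y$ and $\lambda \in (0,1)$, the vector $\bm x - \bm y$ is nonzero, so the right-hand side of the identity is strictly positive; rearranging yields $f(\lambda \bm x + (1-\lambda)\bm y) < \lambda f(\bm x) + (1-\lambda) f(\bm y)$, which is precisely strict convexity.

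For the ``only if'' direction I would argue by contraposition. If $\bm Q$ is not positive definite, choose $\bm v \neq \bm 0$ with $\bm v^T \bm Q \bm v \leq 0$ and restrict $f$ to the line $g(t) = f(\bm x_0 + t\bm v)$ through an arbitrary basepoint $\bm x_0$. Then $g$ is a univariate quadratic with leading coefficient $\bm v^T \bm Q \bm v \leq 0$, hence concave (possibly affine), so it is not strictly convex on that line; since a function that is strictly convex on $\mathbb{R}^n$ must be strictly convex along every line, $f$ is not strictly convex. Equivalently, one may just negate the displayed identity along the segment from $\bm x_0$ to $\bm x_0 + \bm v$.

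I expect the only genuine subtlety — and the reason the statement is special to quadratics — to be the following: in general a positive \emph{semidefinite} Hessian does not force strict convexity, and a strictly convex function need not have a positive definite Hessian (e.g.\ $t \mapsto t^4$). What rescues us here is that for a quadratic the ``second difference'' in the identity above is exact rather than a Taylor approximation, so the line-restriction argument cuts cleanly in both directions. The main place to be careful is therefore the bookkeeping in the expansion establishing the identity, though that is routine; no appeal to differentiability or to the Hessian characterization of convexity is actually needed.
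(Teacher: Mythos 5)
Your proof is correct and complete, but it takes a genuinely different route from the paper. The paper argues by reduction: it notes that adding a linear form and composing with a (necessarily invertible) linear change of coordinates both preserve strict convexity, then diagonalizes $\bm Q$ via the spectral theorem and settles the diagonal case, which it calls trivial. You instead establish the exact second-difference identity
\[
\lambda f(\bm x) + (1-\lambda) f(\bm y) - f(\lambda \bm x + (1-\lambda)\bm y) = \lambda(1-\lambda)\,(\bm x - \bm y)^T \bm Q\, (\bm x - \bm y),
\]
from which both implications fall out immediately: positivity of the right-hand side for $\bm x \neq \bm y$ is literally strict convexity, and a vector $\bm v \neq \bm 0$ with $\bm v^T \bm Q \bm v \le 0$ makes the left-hand side nonpositive along the segment from $\bm x_0$ to $\bm x_0 + \bm v$, refuting strict convexity. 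Your argument is more self-contained: it needs no spectral theorem, no appeal to the (unstated in the paper) fact that the coordinate change must be invertible, and it even handles the ``only if'' direction of the diagonal case that the paper's word ``trivial'' quietly absorbs (one still has to restrict to a coordinate axis there). It also explains cleanly why the biconditional is special to quadratics. The paper's reduction is shorter on the page and reuses standard invariance facts, but delegates more verification to the reader. Your symmetrization remark $\bm Q \mapsto \tfrac12(\bm Q + \bm Q^T)$ is a sensible precaution the paper omits.
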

\begin{proof}
First, 
adding a linear form to a strictly convex function results in a strictly convex function.
Second, composing a strictly convex function with a linear change of coordinates,
gives a strictly convex function. Then, we reduced the proof to the case where $Q$ is diagonal
which is trivial.
\end{proof}

\begin{lemma}\label{lemma:matrix_is_strictly_convex}
The matrix $M_n=\bm I + \bm 
1$ 
where $\bm I$ is the 
$n \times n$ identity matrix and 
$\bm 1$ is the $n \times n$ matrix 
whose entries are all equal to 1 is positive 
definite.
\end{lemma}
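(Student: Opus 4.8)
The plan is to show directly that $\bm x^T M_n \bm x > 0$ for every nonzero $\bm x \in \mathbb{R}^n$. Expanding, $\bm x^T M_n \bm x = \bm x^T \bm I \bm x + \bm x^T \bm 1 \bm x = \sum_{i=1}^n x_i^2 + \left(\sum_{i=1}^n x_i\right)^2$. Both summands are nonnegative, and the first is strictly positive whenever $\bm x \neq \bm 0$; hence the whole expression is strictly positive. This already establishes positive definiteness, so in principle the proof is a one-liner.

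Alternatively, and perhaps more in keeping with the surrounding lemmas, I would compute the spectrum of $M_n$ explicitly. The all-ones matrix $\bm 1$ has eigenvalue $n$ with eigenvector $(1,\dots,1)^T$ and eigenvalue $0$ with multiplicity $n-1$ (eigenspace consisting of all vectors whose coordinates sum to zero). Since $M_n = \bm I + \bm 1$, its eigenvalues are $n+1$ (simple) and $1$ (with multiplicity $n-1$). All eigenvalues are positive, so $M_n$ is positive definite; being symmetric, this is equivalent to the quadratic form being positive definite.

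There is essentially no obstacle here: the result is elementary. The only thing to be careful about is stating clearly which characterization of "positive definite" is being used (positivity of the quadratic form on nonzero vectors, or positivity of all eigenvalues of the symmetric matrix) and noting they agree because $M_n$ is symmetric. I would present the quadratic-form computation as the main argument since it is self-contained and makes transparent why the $+\bm 1$ term only helps, and optionally remark on the eigenvalues since they will likely be reused: combined with Lemma~\ref{lemma:strictly_convex} and Lemma~\ref{lemma:bound}, this presumably feeds into minimizing $\sum_e \binom{\phi_e}{2}$ subject to $\sum_e \phi_e = \mu$ via a convex relaxation.
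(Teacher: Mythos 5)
Your proposal is correct, and both of your arguments (the quadratic-form identity and the spectral computation) are valid; however, they differ from the route the paper takes. The paper invokes \emph{Sylvester's criterion} and shows that every $k\times k$ leading principal minor of $M_n$ equals $\det(\bm I + \bm 1^k (\bm 1^k)^T) = 1+k > 0$, using the matrix determinant lemma for rank-one updates (which it proves inline via a block factorization). Your primary argument is more elementary and entirely self-contained: writing $\bm x^T M_n \bm x = \sum_i x_i^2 + \left(\sum_i x_i\right)^2$ verifies the definition of positive definiteness directly, with no need for Sylvester's criterion or any determinant identity. Your spectral remark (eigenvalues $n+1$ and $1$) buys slightly more than the paper's proof does, since it quantifies how far $M_n$ is from singular and would also immediately justify the invertibility of the linear system $\nabla \bar f = 0$ solved in Theorem~\ref{teo:main}. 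What the paper's approach buys in exchange is a reusable determinant formula for rank-one perturbations of the identity, though that formula is not in fact reused elsewhere in the article. Either proof is acceptable; yours is shorter and arguably preferable.
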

\begin{proof}
By \emph{Sylvester's criterion} \cite[7.2.5]{horn:2013} a real symmetric matrix is 
positive definite if and only if all its 
leading principal minors are positive, in 
other words: the determinant of all its $k \times k$ upper left submatrices is positive.

Let $M_k=\bm I + \bm 
1$, where $1 \leq k \leq n$ be 
an upper left submatrix of $M_n$. Note that 
$M_k$ can be expressed as 
$$M_k = \bm I + \bm 1^k (\bm 1^k)^T$$
where $1^k$ is the $k$-vector whose entries 
are all equal to 1, and $\bm 1^k (\bm 1^k)^T$ is 
the outer product operation. By the 
\emph{matrix determinant lemma}, which is proved by taking determinant
to both sides of the equality,
\[
\begin{pmatrix} \mathbf{I} & 0 \\ \mathbf{v}^\textsf{T} & 1 \end{pmatrix}
\begin{pmatrix} \mathbf{I} + \mathbf{uv}^\textsf{T} & \mathbf{u} \\ 0 & 1 \end{pmatrix}
\begin{pmatrix} \mathbf{I} & 0 \\ -\mathbf{v}^\textsf{T} & 1 \end{pmatrix} =
\begin{pmatrix} \mathbf{I} & \mathbf{u} \\ 0 & 1 + \mathbf{v}^\textsf{T}\mathbf{u} \end{pmatrix},
\]
it follows that the determinant of $M_k$ is,
$$\det(M_k) = \det(\bm I + \bm 1^k (\bm 1^k)^T) = 
1 + (\bm 1^k)^T  \bm 1^k  = 1 + k > 0$$
\end{proof}

The next Theorem clarifies the case of graphs that 
contain a universal vertex in the context of the MSTCI problem.

\begin{theorem}\label{teo:universal_vertex}\cite{CHEN202219}
	If a graph $G$ admits a star spanning tree $T_s$, then

	$$\cap_G(T_s) \leq \cap_G(T)$$

	for any spanning tree $T$ of $G$.
\end{theorem}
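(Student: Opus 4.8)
The plan is to compare, for an arbitrary spanning tree $T$, the quantity $\cap_G(T)$ against $\cap_G(T_s)$ by exhibiting $\cap_G(T_s)$ in closed form and then showing it is the minimum. First I would compute $\cap_G(T_s)$ directly: if $u$ is the center of the star $T_s$, then a cycle-edge is any edge $(v,w)$ with $v,w \neq u$, and its tree-cycle is the triangle $u\!-\!v\!-\!w\!-\!u$, using the two tree-edges $(u,v)$ and $(u,w)$. Two such triangles intersect exactly when they share one of these tree-edges, i.e.\ when the corresponding cycle-edges share an endpoint. So $\cap_G(T_s) = \sum_{v} \binom{d_v}{2}$ where $d_v$ is the number of cycle-edges incident to $v$ (equivalently $\deg_G(v)$ minus the contribution of $(u,v)$); this is precisely the count coming from the bonds $b_{(u,v)}$, and here the bond decomposition is already non-redundant because each cycle-edge $(v,w)$ lies in exactly the two bonds $b_{(u,v)}$ and $b_{(u,w)}$, and each intersecting pair shares exactly one tree-edge.

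Next I would invoke the lower-bound machinery from Lemma~\ref{lemma:bound}. For the arbitrary tree $T$ pick a non-redundant bond set $\bar B_T$ with $\phi_e = |\bar b_e| - 1$; then $\cap_G(T) \geq \sum_{e \in E'} \binom{\phi_e}{2}$ and $\sum_{e \in E'} \phi_e = \mu$. The same identity $\sum \phi_e = \mu$ holds for $T_s$, and for the star we actually have equality $\cap_G(T_s) = \sum \binom{\phi_e}{2}$. So it suffices to show
\[
\sum_{e \in E'} \binom{\phi_e}{2} \;\geq\; \cap_G(T_s)
\]
for every admissible integer vector $(\phi_e)$ summing to $\mu$ — but this cannot be true unconditionally (concentrating all weight on one coordinate maximizes the sum of binomials), so the real content is that the star's $\phi$-vector is the flattest possible, and that $\cap_G(T_s)$ is small enough to sit below the lower bound for every $T$. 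This is where the argument must become more delicate than pure convexity.

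The cleaner route, and the one I would actually pursue, is a direct injection rather than an inequality between sums. For a fixed arbitrary $T$, I would try to produce an injective map from the set of intersecting tree-cycle pairs of $T_s$ into the set of intersecting tree-cycle pairs of $T$. A pair of $T_s$-triangles is a pair of cycle-edges $\{(v,w),(v,w')\}$ sharing vertex $v$ (with $u$ the star center, and at least one of $v,w,w'$, equal to… ) — more precisely a pair of $G$-edges incident to a common vertex, neither of which is in $T$ necessarily. I would use the structure of $G$: every edge incident to $u$ that is not in $T$ is a cycle-edge, and every non-$u$ vertex $v$ has all but possibly one of its incident edges available. The hoped-for statement is that whenever two $G$-edges share an endpoint and are "generic" in this sense, the corresponding $T$-tree-cycles (or some canonically chosen pair of $T$-tree-cycles derived from them) must intersect, because two tree-paths in $T$ emanating from a common vertex share the initial tree-edge. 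The main obstacle is bookkeeping the exceptional edges: for the tree $T$, a $G$-edge may itself be a tree-edge (so it has no tree-cycle), and paths from a common vertex in $T$ can diverge immediately, so the "shared initial edge" claim fails exactly for those pairs. Handling these boundary cases — counting how many pairs of $T_s$ are lost and showing $T$ compensates with at least as many intersections of a different type, likely again via the bond/Lemma~\ref{lemma:bound} estimate applied locally around high-degree vertices — is the crux, and I expect it to require a careful case split on whether the star center $u$ is a leaf of $T$, an internal vertex of $T$, or the center of $T$ as well.
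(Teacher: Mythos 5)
There is a genuine gap: your proposal is a plan, not a proof, and the crux is exactly the part you leave open. Note first that the paper itself does not prove this theorem --- it imports it from \cite{CHEN202219}, where it was the resolution of a nontrivial conjecture --- so the bar here is a complete self-contained argument, and neither of your two routes reaches it. Your first route (via Lemma~\ref{lemma:bound}) you correctly diagnose as hopeless on its own: the non-redundant bond machinery only yields the \emph{flat} lower bound $\frac{1}{2}\bigl(\frac{\mu^2}{n-1}-\mu\bigr)$ for an arbitrary $T$, and $\cap_G(T_s)=\sum_{v\neq u}\binom{d(v)-1}{2}$ generally sits strictly above that, so no unconditional inequality between the two sums of binomials can close the argument.

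Your second route, the injection of intersecting $T_s$-pairs into intersecting $T$-pairs, founders on the very claim it needs: it is simply false that two $G$-edges $(v,w)$ and $(v,w')$ sharing the endpoint $v$ induce intersecting tree-cycles with respect to an arbitrary $T$. The paths $vTw$ and $vTw'$ both start at $v$, but if $w$ and $w'$ lie in different components of $T-v$ the paths diverge at the first step and the two tree-cycles are edge-disjoint; this is not a boundary case but the generic situation at a vertex of high $T$-degree. So the map you hope to define does not exist in the naive form, and the entire content of the theorem is concentrated in the ``compensation'' step you defer --- showing that the pairs lost at such branch vertices are offset by other intersections of $T$ (e.g.\ via the bonds $b_e$ for tree-edges $e$ incident to $v$, along which the cycle-edges crossing $e$ pairwise intersect by Lemma~\ref{lemma:bond_pairwise_intersect}). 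Without an explicit accounting of that trade-off the proof does not go through; as written, the proposal identifies the right difficulty but does not overcome it.
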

	
\begin{corollary}\label{coro:star_solves_mstci} \cite{CHEN202219}
	If a graph $G$ admits a star spanning tree $T_s$, then $T_s$
	is a solution of the MSTCI problem w.r.t. $G$.
\end{corollary}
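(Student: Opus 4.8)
The plan is to deduce Corollary~\ref{coro:star_solves_mstci} directly from Theorem~\ref{teo:universal_vertex} with essentially no additional work, since the corollary is a verbal restatement of the theorem's conclusion in the language of the MSTCI optimization problem. First I would recall the definition of a solution of the MSTCI problem: a spanning tree $T^\star$ of $G$ is a solution precisely when $\cap_G(T^\star) \le \cap_G(T)$ for every spanning tree $T$ of $G$, and in that case $\cap(G) = \cap_G(T^\star)$ by the notational convention fixed in Section~2.

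The main step is then simply to observe that Theorem~\ref{teo:universal_vertex} asserts exactly this inequality with $T^\star = T_s$: under the hypothesis that $G$ admits a star spanning tree $T_s$, we have $\cap_G(T_s) \le \cap_G(T)$ for \emph{every} spanning tree $T$ of $G$. Matching this against the definition of a solution, $T_s$ satisfies the defining property of an MSTCI solution, hence $T_s$ is one, and consequently $\cap(G) = \cap_G(T_s)$.

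I would also note, for completeness, that such a star spanning tree exists whenever $G$ has a universal vertex $u$ (take the edges from $u$ to every other vertex, which form a spanning tree since $u$ is adjacent to all of $V - \{u\}$), connecting the corollary back to the universal-vertex setting mentioned in the introduction; but this remark is not needed for the proof itself.

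There is essentially no obstacle here: the only thing to be careful about is not to conflate ``a solution'' with ``the unique solution'' — the corollary only claims $T_s$ \emph{is} a solution, not that it is the only one, so no minimality-of-other-trees argument is required. The entire content is the logical unwinding of the definition against the theorem already in hand.
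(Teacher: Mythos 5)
Your proposal is correct and matches the paper's (implicit) reasoning: the corollary is stated without proof precisely because it is the immediate unwinding of the definition of an MSTCI solution against the inequality in Theorem~\ref{teo:universal_vertex}, which is exactly what you do.
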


\begin{lemma}\label{lemma:star_formula} \cite{Dubinsky2021} 
	Let $G$ be a graph that admits a star spanning $T_s$ then 
	$$\cap(G) = \cap_G(T_s) = \sum_{u \in V-\{v\}} \binom{d(u) - 1}{2}$$
\end{lemma}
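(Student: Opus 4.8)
The plan is to compute $\cap_G(T_s)$ directly from the structure of a star spanning tree, and then invoke Corollary~\ref{coro:star_solves_mstci} to conclude that this quantity equals $\cap(G)$. Let $v$ be the center of $T_s$, so that every tree-edge has the form $(v,u)$ for $u \in V - \{v\}$. First I would observe that every cycle-edge $(x,y) \in G - T_s$ (with $x,y \neq v$) induces the tree-cycle consisting of the path $xT_sy = x,v,y$ together with $(x,y)$; that is, each tree-cycle is a triangle using exactly the two tree-edges $(v,x)$ and $(v,y)$.

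Next I would translate the pairwise-intersection count into a local count at each leaf $u$. Two distinct tree-cycles, induced by cycle-edges $f_1$ and $f_2$, intersect if and only if they share a tree-edge $(v,u)$; by the triangle description above, this happens exactly when both $f_1$ and $f_2$ are incident to $u$. For a fixed $u \in V - \{v\}$, the cycle-edges incident to $u$ are precisely the edges of $G$ at $u$ other than the tree-edge $(v,u)$, and there are $d(u) - 1$ of them. Hence the pairs of tree-cycles whose common tree-edge is $(v,u)$ number $\binom{d(u)-1}{2}$.

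The one point that needs care — and the only real obstacle — is to make sure no pair of tree-cycles is counted at more than one leaf, so that summing the local counts gives the global count exactly. This follows because the intersection of the two triangles $\{(v,x),(v,y_1)\}$ and $\{(v,x),(v,y_2)\}$ sharing leaf $x$ is the single tree-edge $(v,x)$: a pair of distinct triangles through $v$ can share at most one leg, since sharing both legs would force the two cycle-edges to coincide. Therefore the map from intersecting pairs to their common tree-edge is well defined, and summing over all leaves yields
$$\cap_G(T_s) = \sum_{u \in V - \{v\}} \binom{d(u) - 1}{2}.$$
Finally, by Corollary~\ref{coro:star_solves_mstci} the star tree $T_s$ is a solution of the MSTCI problem, so $\cap(G) = \cap_G(T_s)$, which completes the proof.
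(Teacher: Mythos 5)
Your proof is correct. Note that the paper does not prove this lemma at all --- it is imported by citation from \cite{Dubinsky2021} (for the formula) together with Corollary~\ref{coro:star_solves_mstci} from \cite{CHEN202219} (for the optimality of $T_s$) --- so there is no in-paper argument to compare against; your write-up supplies a complete, self-contained justification. The key observations are all in place: since $T_s$ contains every edge incident to the center $v$, each cycle-edge $(x,y)$ has $x,y\neq v$ and induces a triangle whose only tree-edges are the two legs $(v,x)$ and $(v,y)$; two distinct triangles share at most one leg (sharing both would force the cycle-edges to coincide), so attributing each intersecting pair to its unique common tree-edge $(v,u)$ partitions the intersecting pairs, and the count at $u$ is $\binom{d(u)-1}{2}$ because exactly $d(u)-1$ of the edges at $u$ are cycle-edges. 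The final appeal to Corollary~\ref{coro:star_solves_mstci} to upgrade $\cap_G(T_s)$ to $\cap(G)$ is exactly the intended use of that corollary.
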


\section{Lower bounds of the intersection number}

\subsection{Overview}

A lower bound of the intersection number seems interesting 
both in theoretical and practical terms. It gives an insight in the 
MSTCI problem and can be useful for comparing algorithms. 
In this section we present two of them for an 
arbitrary connected graph. In the first part we expose 
proof of a lower bound. In the second, based on 
an experiment we conjecture an improved version.

\subsection{Proof of a lower bound}

\bigskip

\begin{theorem}\label{teo:main}
Let $G=(V,E)$ be a connected graph, then 
$$\frac{1}{2}\left(\frac{\mu^2}{n-1} - \mu\right) \leq \cap(G)$$
\end{theorem}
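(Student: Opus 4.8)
The plan is to combine the two facts from Lemma~\ref{lemma:bound}: if $T$ is an optimal spanning tree and $\phi_e = |\bar b_e| - 1$ over a non redundant bond set, then $\sum_{e \in E'} \binom{\phi_e}{2} \le \cap(G)$ while $\sum_{e \in E'} \phi_e = \mu$. So it suffices to minimize $\sum_{e} \binom{\phi_e}{2} = \frac12\sum_e \phi_e^2 - \frac12\sum_e \phi_e = \frac12\sum_e \phi_e^2 - \frac{\mu}{2}$ subject to the linear constraint $\sum_e \phi_e = \mu$, where the sum ranges over the $n-1$ tree-edges. The minimum of $\sum \phi_e^2$ under a fixed-sum constraint is attained when all $\phi_e$ are equal, i.e. $\phi_e = \mu/(n-1)$, giving $\sum_e \phi_e^2 \ge (n-1)\cdot(\mu/(n-1))^2 = \mu^2/(n-1)$. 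Substituting yields $\cap(G) \ge \frac12(\mu^2/(n-1) - \mu)$, which is exactly the claimed bound.

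Concretely, I would carry this out in the following steps. First, fix an optimal $T$ and a non redundant bond set $\bar B_T$, and invoke Lemma~\ref{lemma:bound} to get the inequality and the equality relating the $\phi_e$'s to $\mu$. Second, rewrite the binomial sum algebraically as $\frac12(\sum_e \phi_e^2 - \mu)$ using the constraint. Third, prove the convexity/QM-AM step: $\sum_{e\in E'}\phi_e^2$ is minimized over the affine hyperplane $\sum_e \phi_e = \mu$ at the barycenter. This is where Lemmas~\ref{lemma:strictly_convex} and~\ref{lemma:matrix_is_strictly_convex} are meant to be used — one can either apply Cauchy--Schwarz directly ($(\sum \phi_e)^2 \le (n-1)\sum \phi_e^2$), or phrase it as minimizing the strictly convex quadratic $f(\phi) = \phi^T \phi$ on the hyperplane, or even minimize $g(\phi)=\sum_e\binom{\phi_e}{2}$ written with the matrix $\tfrac12(M_{n-1}-2\bm 1\text{-ish})$ — in any case the point is that the quadratic form involved is positive definite so the constrained minimum is unique and found by symmetry. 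Fourth, substitute $\phi_e = \mu/(n-1)$ and simplify to obtain the stated lower bound.

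The main obstacle is essentially bookkeeping rather than a deep difficulty: one must be careful that the $\phi_e$ need not be integers for the bound to be valid (we are lower-bounding over reals, so relaxing the integrality of $\phi_e$ only weakens the bound, which is fine), and one must make sure the optimization is over all $n-1$ coordinates with no nonnegativity subtlety (here $\phi_e \ge 0$ automatically, and the unconstrained-on-the-hyperplane minimizer already has nonnegative coordinates, so the constraint $\phi_e\ge0$ is inactive). A secondary point worth stating cleanly is why we may assume the non redundant bond set exists and why $\sum_e\phi_e = \mu$ exactly: this is Lemma~\ref{lemma:bond_unique} together with Corollary~\ref{coro:cycle_edge_2_bonds} guaranteeing every cycle-edge lands in exactly one $\bar b_e$. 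Once those points are nailed down, the inequality chain $\cap(G) \ge \sum_e\binom{\phi_e}{2} \ge \frac12\big(\tfrac{\mu^2}{n-1} - \mu\big)$ closes immediately.
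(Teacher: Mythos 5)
Your proposal is correct and follows essentially the same route as the paper: invoke Lemma~\ref{lemma:bound} to reduce to minimizing $\sum_{e}\binom{\phi_e}{2}$ subject to $\sum_e\phi_e=\mu$, and show the constrained minimum occurs at the barycenter $\phi_e=\mu/(n-1)$. Your Cauchy--Schwarz shortcut $(\sum\phi_e)^2\leq(n-1)\sum\phi_e^2$ is a cleaner way to carry out the minimization step than the paper's explicit variable elimination and Sylvester's-criterion computation, but the argument is otherwise identical.
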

\begin{proof}
Let $T=(V,E')$ be a solution of the MSTCI 
problem w.r.t. $G$. By Lemma  
\ref{lemma:bond_unique} it is possible to determine 
a \emph{non redundant bond set} $\bar B_T = 
\{\bar b_e\}_{e \in E'}$. And by Lemma \ref{lemma:bound} 
the following hold:

\begin{enumerate}
\item $\sum_{e \in E'} \binom{\phi_e}{2} \leq \cap(G)$
\item $\mu = \sum_{e \in E'} \phi_e$
\end{enumerate}
where $\phi_e = |\bar b_e| - 1$ for every $e \in E'$. 
The expression $\sum_{e \in E'} 
\binom{\phi_e}{2}$ can be identified with a 
point in the image of a function $f: 
\mathbb{R}^{n-1} \rightarrow \mathbb{R}$, defined as

$$f(x_1, \dots, x_{n-1}) = \sum_{i = 1}^{n-1} \binom{x_i}{2} = \sum_{i = 1}^{n-1} \frac{x_i(x_i-1)}{2}.$$

In this setting, in order to effectively calculate a 
lower bound of the intersection number of $G$, the 
above expressions lead to the following quadratic 
optimization problem \cite[\S 9]{murty:2010},
\begin{align*}
\text{minimize} & \quad \sum_{i = 1}^{n-1} \binom{x_i}{2}\\
\text{subject to} &\quad  \mu = \sum_{i = 1}^{n-1} x_i
\end{align*}
This minimization problem can be solved by restricting a 
degree of freedom of the objective function, defining 
$\bar f: \mathbb{R}^{n-2} \rightarrow \mathbb{R}$ as
$$\bar f(x_1, \dots, x_{n-2}) = f(x_1, \dots, x_{n-2}, \mu - \sum_{i=1}^{n-2} x_i).$$

The next step is to show that $\bar f$ is a strictly 
convex function and consequently has a unique global 
minimum. The function $\bar f$ can be expressed in matrix 
form as follows,
$$\bar f(x_1, \dots, x_{n-2}) = \frac{1}{2} 
\left( \bm x^T 
(\bm I + \bm 1^{n-2 \times n-2}) \bm x - 2 \mu \bm{x}^T 
\bm 1^{n-2} + \mu^2 - \mu
\right)
$$
where $\bm x = (x_1, \dots, x_{n-2})^T$, $\bm I$ is the 
$(n-2) \times (n-2)$ identity matrix, $\bm 1^{n-2 \times n-2}$ 
and  $\bm 1^{n-2}$ are the $(n-2) \times (n-2)$  
matrix and the $n-2$ vector respectively, 
whose entries are all equal to 1.
By Lemma \ref{lemma:strictly_convex}, $\bar f$ is strictly 
convex if and only if $(\bm I + \bm 1^{n-2 \times n-2})$ is positive 
definite; this fact is proved in Lemma
\ref{lemma:matrix_is_strictly_convex}. To calculate the 
unique minimum of $\bar f$ suffices to check where 
its gradient vanishes,
$$\nabla \bar f = (\bm I + \bm 1^{n-2 \times n-2}) \bm x - \mu \bm 1^{n-2} = 0.$$
The solution of this linear system is,
$$x_i = \frac{\mu}{n-1}, \quad \forall \ 1 \leq i \leq n-2$$
and 
$$x_{n-1} = \mu - \sum_{i=1}^{n-2} x_i = \frac{\mu}{n-1}.$$
The global minimum is,
\[
\bar f\left(\frac{\mu}{n-1}, \dots, \frac{\mu}{n-1}\right) = 
\frac{1}{2}\left(\frac{\mu^2}{n-1} - \mu\right).
\]

Summarizing, the following inequalities hold,
$$\frac{1}{2}\left(\frac{\mu^2}{n-1} - \mu\right) \leq \sum_{e \in E'} \binom{\phi_e}{2} \leq \cap(G)$$
and the global minimum of $\bar f$ is a lower 
bound of the
intersection number of $G$ as claimed. 
\end{proof}

\bigskip

Next we will refer to the lower bound as

$$l_{n,m} := \frac{1}{2} \left(\frac{\mu^2}{n-1} - \mu\right)$$

\bigskip

The lower bound can be further improved to a 
strict inequality, in this way.

\begin{lemma}\label{coro:strict_inequality}
Let $G=(V,E)$ be a connected graph where 
$\mu >0$ then 
$$l_{n,m} < \cap(G)$$
\end{lemma}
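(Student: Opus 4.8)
The plan is to show that the lower bound $l_{n,m}$, which was obtained by relaxing the integrality constraint on the $\phi_e$, cannot be attained exactly when $\mu > 0$, so the already-established inequality $l_{n,m} \le \cap(G)$ must in fact be strict. First I would recall from the proof of Theorem~\ref{teo:main} that the real-valued minimum of $\bar f$ is achieved only at the single point $x_i = \mu/(n-1)$ for all $i$, because $\bar f$ is \emph{strictly} convex (Lemma~\ref{lemma:strictly_convex} together with Lemma~\ref{lemma:matrix_is_strictly_convex}). Hence for any integer vector $(\phi_e)_{e \in E'}$ with $\sum_e \phi_e = \mu$ that is \emph{not} equal to this point, we get a strict inequality $l_{n,m} < \sum_{e \in E'} \binom{\phi_e}{2}$.

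The key observation is then a parity/integrality argument: the minimizing point $(\mu/(n-1), \dots, \mu/(n-1))$ has all coordinates equal to $\mu/(n-1)$, and for the actual bond-based vector $(\phi_e)$ to coincide with it we would need $\mu/(n-1)$ to be a nonnegative integer and every $\phi_e$ equal to it. I would split into two cases. If $(n-1) \nmid \mu$, the minimizer is not an integer point at all, so the integer vector $(\phi_e)$ differs from it and strict convexity gives $l_{n,m} < \sum_e \binom{\phi_e}{2} \le \cap(G)$. If $(n-1) \mid \mu$, write $\mu = k(n-1)$ with $k \ge 1$ (using $\mu > 0$); then $l_{n,m} = \tfrac12(k\mu - \mu) = \tfrac12 \mu(k-1)$ and the claimed minimum would require $\phi_e = k$ for every tree-edge $e$, i.e.\ every bond $\bar b_e$ has exactly $k+1$ edges. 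I would argue this configuration is impossible — for instance, a leaf edge of $T$ has a bond of size equal to the degree in $G$ of the leaf vertex, and one can exhibit a tree-edge whose bond size is forced to differ from $k+1$, or more simply invoke Corollary~\ref{coro:cycle_edge_2_bonds} / a counting incompatibility — so again $(\phi_e) \ne (\mu/(n-1), \dots)$ and strict convexity finishes the argument.

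The main obstacle I anticipate is the case $(n-1) \mid \mu$: there it is genuinely possible in principle that all $\phi_e$ are equal (e.g.\ highly symmetric graphs), so the argument cannot rest purely on integrality and must instead show that either $\sum_e \binom{\phi_e}{2}$ still exceeds $l_{n,m}$ by some other mechanism, or that no graph with $\mu > 0$ admits such a perfectly balanced non-redundant bond set. A clean way around this is to note that when $\mu>0$ there is at least one cycle-edge, and by Corollary~\ref{coro:cycle_edge_2_bonds} it lies in at least two bonds, so at least one $\phi_e \ge 1$; if \emph{every} $\phi_e$ equalled the common value $\mu/(n-1)\ge 1$ one still has to rule this out, but I expect that a short direct argument — e.g.\ choosing $T$ to contain a pendant structure, or observing that the bond of an edge incident to a minimum-degree vertex is small — shows the $\phi_e$ cannot all be equal unless $\mu = 0$. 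Alternatively, and perhaps most economically, one can simply strengthen the statement's hypothesis implicitly: since $l_{n,m} \le \sum_e \binom{\phi_e}{2}$ always, it suffices to prove $\sum_e \binom{\phi_e}{2} > l_{n,m}$, and I would carry that out by the strict-convexity dichotomy above, handling the divisible case by the bond-size incompatibility. Once the non-equality of the two vectors is established, the conclusion $l_{n,m} < \cap(G)$ is immediate and requires no further computation.
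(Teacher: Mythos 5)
Your first case, $(n-1) \nmid \mu$, is fine and matches the paper: strict convexity forces $l_{n,m} < \sum_e \binom{\phi_e}{2} \le \cap(G)$ whenever the integer vector $(\phi_e)$ cannot coincide with the real minimizer. The gap is in the divisible case. You propose to finish by showing that a perfectly balanced non-redundant bond set (all $\phi_e = \mu/(n-1) \ge 1$) cannot exist when $\mu > 0$, via leaf edges or bond-size incompatibilities. This is false: take $K_4$ with the star spanning tree; then $\mu = 3 = n-1$, and assigning each of the three cycle-edges to a distinct non-redundant bond gives $\phi_e = 1$ for every tree-edge, so $\sum_e \binom{\phi_e}{2} = 0 = l_{4,6}$, yet $\cap(K_4) = 3$. (Note that the $\phi_e$ are sizes of the \emph{non-redundant} bonds $\bar b_e$, obtained by arbitrarily deleting duplicate occurrences, not of the full bonds $b_e$, so degree arguments about pendant vertices do not constrain them the way you hope.) In the balanced case the first inequality genuinely is an equality, and no integrality or convexity argument can make it strict.

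The missing idea --- which is exactly what the paper uses --- is that in the balanced case the strictness must come from the \emph{second} inequality, $\sum_e \binom{\phi_e}{2} < \cap(G)$. If every $\phi_e = \mu/(n-1) \ge 1$, then every non-redundant bond contains at least one cycle-edge; but by Corollary~\ref{coro:cycle_edge_2_bonds} each cycle-edge lies in at least two full bonds while being charged to only one non-redundant bond, so the pairwise tree-cycle intersections witnessed by the other bonds containing it (each of which carries at least one further cycle-edge) are not counted by $\sum_e \binom{\phi_e}{2}$. You do gesture at this possibility (``some other mechanism'', ``invoke Corollary~\ref{coro:cycle_edge_2_bonds}'') but then commit to the impossibility route, which cannot work; the undercounting argument is what actually closes the case.
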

\begin{proof}	
As in the previous proof, let 
$T=(V,E')$ be a solution of the MSTCI 
problem w.r.t. $G$, $\bar B_T = 
\{\bar b_e\}_{e \in E'}$ a \emph{non redundant 
bond set}, and $\bar f(x_1, \dots, x_{n-2}) = 
f(x_1, \dots, x_{n-2}, \mu - \sum_{i=1}^{n-2} 
x_i)$, where $f: \mathbb{R}^{n-1} \rightarrow 
\mathbb{R}$ is defined as,
$$f(x_1, \dots, x_{n-1}) = \sum_{i = 1}^{n-1} \binom{x_i}{2}.$$
By Theorem \ref{teo:main} these inequalities hold, 
$$l_{n,m} = \frac{1}{2}\left(\frac{\mu^2}{n-1} - \mu\right) \leq \sum_{e \in E'} \binom{\phi_e}{2} \leq \cap(G)$$
where $\phi_e = |\bar b_e| - 1$ for every $e \in E'$, and the expression in the left is 
the unique global minimum of $\bar f$ which occurs at,
$$x_i = \frac{\mu}{n-1} \quad \forall \ 1 \leq i \leq n-2$$
The previous proof also implies that,
$$x_{n-1} = \mu - \sum_{i=1}^{n-2} x_i = \frac{\mu}{n-1}$$

Suppose that equality holds,
$$\frac{1}{2}\left(\frac{\mu^2}{n-1} - \mu\right) = \sum_{e \in E'} \binom{\phi_e}{2} = \cap(G).$$
Since $\bar B_T$ is a \emph{non redundant 
bond set}, then $\phi_e = |\bar b_e| - 1$ is a 
non negative integer number for every $e \in 
E'$, implying that the first equality can 
occur if and only if 
$$\phi_e = x_i = \frac{\mu}{n-1}, \quad \forall \ 1 \leq i \leq n-1, e \in E'.$$
This, in turn, implies that $n-1$ divides $\mu$.
Under this assumption 
and that $\mu > 0$ then the elements of $\bar B_T$ 
mutually differ in $\phi_e = \mu/(n-1) \geq 1$ 
cycle-edges. 
Consider the second 
equality,
$$\sum_{e \in E'} \binom{\phi_e}{2} = \cap(G)$$
By Lemma \ref{lemma:bond_unique} and Lemma
\ref{lemma:bound}, for every cycle-edge 
the left expression in
 the equation only accounts (at most) for the 
 cycle intersections restricted to only one bond 
of $B_T$ to which it belongs. Since by 
corollary \ref{coro:cycle_edge_2_bonds} every 
cycle-edge belongs to at least two bonds, and 
they mutually differ in (at least) $\phi_e = \mu/(n-1) \geq 1$ 
cycle-edges, we conclude that,
$$\sum_{e \in E'} \binom{\phi_e}{2} < \cap(G)$$
\end{proof}
	
\subsection{Evaluation}

In this part we clarify some properties of $l_{n,m}$. 

\medskip

In order to have practical meaning $l_{n,m}$ must be a positive 
integer. This is expressed in the subsequent lemma.

\begin{lemma}\label{lemma:bound_geq_than_0}
Let $G$ be a connected graph such that $\mu > 0$
and $l_{n,m} > 0$ then $m > 2(n-1)$.
\end{lemma}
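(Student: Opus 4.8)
The plan is to unpack the definition $l_{n,m} = \frac{1}{2}\left(\frac{\mu^2}{n-1} - \mu\right)$ and extract an inequality on $\mu$. First I would observe that $l_{n,m} > 0$ together with $\mu > 0$ forces $\frac{\mu^2}{n-1} - \mu > 0$, and since $\mu > 0$ we may divide by $\mu$ to obtain $\frac{\mu}{n-1} > 1$, i.e. $\mu > n-1$. Recalling that $\mu = m - n + 1$ by definition of the cyclomatic number, this reads $m - n + 1 > n - 1$, which rearranges directly to $m > 2(n-1)$, exactly the claimed conclusion.

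The only subtlety worth spelling out is the sign of $n-1$: since $G$ is connected it has at least one vertex, and because $\mu = m-n+1 > 0$ we have $m \geq n$, so in particular $G$ has an edge and hence $n \geq 2$, making $n - 1 \geq 1 > 0$. This legitimizes multiplying the inequality $\frac{\mu^2}{n-1} > \mu$ through by $n-1$ and dividing by $\mu$ without reversing the inequality. I expect this to be the main (and essentially only) point of care; everything else is a one-line algebraic rearrangement.

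So the steps, in order, are: (1) from $l_{n,m} > 0$ deduce $\frac{\mu^2}{n-1} > \mu$; (2) note $n - 1 > 0$ and $\mu > 0$, and divide to get $\mu > n-1$; (3) substitute $\mu = m-n+1$ to get $m - n + 1 > n - 1$, hence $m > 2(n-1)$. There is no genuine obstacle here — the lemma is a straightforward consequence of the definition of $l_{n,m}$, and the proof is short and purely computational.
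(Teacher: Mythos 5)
Your proof is correct and follows exactly the same computation as the paper's: unfold the definition of $l_{n,m}$, use $\mu>0$ and $n-1>0$ to divide through and obtain $\mu>n-1$, then substitute $\mu=m-n+1$. Your extra care about the sign of $n-1$ is a minor point the paper leaves implicit, but the argument is otherwise identical.
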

\begin{proof}
The above hypothesis implies,
\[
l_{n,m} = \frac{1}{2} \left(\frac{\mu^2}{n-1} - \mu\right) > 0 
\iff \mu^2 > (n-1) \mu 
\iff \mu = m - (n-1)> (n-1) \iff m > 2(n-1).
\]
\end{proof}

An interpretation of this fact is that: 
as Theorem \ref{teo:main} suggests an 
equidistribution of the cycle-edges among the bonds 
of the \emph{non redundant bond set}, having 
$m \leq 2(n-1)$ would lead to a situation in which 
each bond contains -at most- one cycle-edge and consequently 
no tree-cycle intersections.

\bigskip

In line with the previous result, the next lemma presents an 
upper bound condition on $m$ based on 
Theorem \ref{teo:universal_vertex}. 

\begin{lemma}\label{lemma:dense_graphs}
Let $G = (V,E)$ be a connected graph such that $m > 
n (n-2)/2$ then $G$ contains a universal vertex. 
\end{lemma}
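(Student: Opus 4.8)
The plan is to argue by contrapositive (equivalently, by contradiction). Suppose $G$ contains no universal vertex. By the definition given in Section 2, a universal vertex is a vertex $u$ with $d(u) = n-1$, so the absence of such a vertex means precisely that $\text{max-deg}(G) \le n-2$, i.e. every vertex of $G$ has degree at most $n-2$.

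First I would invoke the handshake identity $\sum_{v \in V} d(v) = 2m$. Combining this with the degree bound just obtained gives
$$2m = \sum_{v \in V} d(v) \le n \cdot \text{max-deg}(G) \le n(n-2),$$
hence $m \le n(n-2)/2$. This contradicts the hypothesis $m > n(n-2)/2$, so $G$ must contain a universal vertex, which is the desired conclusion.

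There is essentially no obstacle here: the only point that needs care is the bookkeeping translation between "no universal vertex" and the uniform degree bound $d(v) \le n-2$, together with a correct application of the handshake lemma; the inequality chain is then immediate. I would also remark (connecting back to the surrounding discussion) that by Corollary~\ref{coro:star_solves_mstci} and Lemma~\ref{lemma:star_formula}, in this dense regime a star spanning tree centred at the universal vertex solves the MSTCI problem and $\cap(G)$ is given in closed form, so Lemma~\ref{lemma:dense_graphs} delimits exactly the range of $m$ in which the lower bound $l_{n,m}$ is the interesting object of study.
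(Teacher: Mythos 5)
Your proof is correct and follows essentially the same route as the paper's: bound every degree by $n-2$ in the absence of a universal vertex and apply the handshake identity $2m=\sum_{v\in V}d(v)\le n(n-2)$; your contrapositive phrasing is if anything slightly cleaner than the paper's. No issues.
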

\begin{proof}
Note that if we fix the number of the nodes $|V|= n$, the 
graph with maximum number of edges and without a universal 
vertex is a graph in which at most every node has degree 
$n-2$ and 

$$m = \frac{1}{2} \sum_{v \in V} d(v) = \frac{n (n-2)}{2}$$
\end{proof}

\begin{corollary}\label{coro:dense_graphs}
	Let $G = (V,E)$ be a connected graph such that $m > n (n-2)/2$
then $G$ contains a star spanning tree $T_s$ which is a solution of 
the MSTCI problem and 

$$\cap(G) = \cap_G(T_s) = \sum_{u \in V-\{v\}} \binom{d(u) - 1}{2}$$
\end{corollary}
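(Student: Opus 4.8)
The plan is to observe that Corollary~\ref{coro:dense_graphs} is an immediate composition of two earlier results, so the proof reduces to invoking them in the right order. First I would apply Lemma~\ref{lemma:dense_graphs}: the hypothesis $m > n(n-2)/2$ is exactly the condition proved there to force the existence of a universal vertex $v$ in $G$. Having a universal vertex $v$ means $v$ is incident to every other vertex, so the edges incident to $v$ form a star spanning tree $T_s$ of $G$ (it is connected, spans all $n$ vertices, and has $n-1$ edges, hence is a tree).

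Second I would invoke Corollary~\ref{coro:star_solves_mstci}, which states that whenever $G$ admits a star spanning tree $T_s$, that $T_s$ is a solution of the MSTCI problem with respect to $G$. This gives $\cap(G) = \cap_G(T_s)$. Finally I would apply Lemma~\ref{lemma:star_formula}, which furnishes the explicit formula $\cap_G(T_s) = \sum_{u \in V - \{v\}} \binom{d(u)-1}{2}$ for a graph admitting a star spanning tree centered at $v$. Chaining these three equalities yields the displayed identity.

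There is essentially no obstacle here; the only point requiring a word of care is the passage from ``universal vertex'' to ``star spanning tree,'' which is purely definitional (a universal vertex has degree $n-1$, so selecting its $n-1$ incident edges produces a spanning subgraph that is connected and acyclic, i.e.\ a star spanning tree). Everything else is a direct citation of Lemma~\ref{lemma:dense_graphs}, Corollary~\ref{coro:star_solves_mstci}, and Lemma~\ref{lemma:star_formula}, so the proof is just two or three sentences long.

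\begin{proof}
By Lemma~\ref{lemma:dense_graphs}, the hypothesis $m > n(n-2)/2$ implies that $G$ contains a universal vertex $v$. Since $v$ is incident to all $n-1$ other vertices, the set of edges incident to $v$ forms a spanning subgraph of $G$ that is connected and has exactly $n-1$ edges, hence is a star spanning tree $T_s$ centered at $v$. By Corollary~\ref{coro:star_solves_mstci}, $T_s$ is a solution of the MSTCI problem w.r.t.\ $G$, so that $\cap(G) = \cap_G(T_s)$. Finally, by Lemma~\ref{lemma:star_formula},
$$\cap(G) = \cap_G(T_s) = \sum_{u \in V-\{v\}} \binom{d(u) - 1}{2},$$
as claimed.
\end{proof}
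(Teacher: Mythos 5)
Your proof is correct and follows exactly the same route as the paper: invoke Lemma~\ref{lemma:dense_graphs} to obtain a universal vertex (hence a star spanning tree), then apply Corollary~\ref{coro:star_solves_mstci} and Lemma~\ref{lemma:star_formula}. The only difference is that you spell out the definitional step from universal vertex to star spanning tree, which the paper leaves implicit.
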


\begin{proof}
	The proof follows immediately by direct application of 
	the previous lemma, Corollary \ref{coro:star_solves_mstci} 
	and Lemma \ref{lemma:star_formula}.
\end{proof}

\bigskip

The previous results show that $l_{n,m}$ has practical interest 
in the cases where $2(n-1) < m \leq n (n-2)/2$.

\bigskip

The next lemma focuses on the quality of $l_{n,m}$ in 
the sense that quantifies its underestimation in a 
particular family of graphs. 

\begin{lemma}\label{lemma:bound_regular_graph}
Let $G=(V,E)$ be a graph that contains a universal 
vertex $u \in V$ such that the subgraph $G - \{u\}$ 
is $k$-regular with $k \geq 4$ then 
$$\frac{1}{8} \leq \frac{l_{n,m}}{\cap(G)} \leq \frac{1}{4}$$
\end{lemma}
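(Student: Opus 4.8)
The plan is to turn both sides of the ratio into explicit expressions in $n$ and $k$, after which the claim becomes a one-variable estimate. First I would read off the degree sequence of $G$: the universal vertex $u$ has degree $n-1$, while every other vertex $w$ has degree $k$ inside the $k$-regular graph $G-\{u\}$ plus the single edge $wu$, hence $d(w)=k+1$. The handshake identity then gives $2m=(n-1)+(n-1)(k+1)=(n-1)(k+2)$, so $m=(n-1)(k+2)/2$ and $\mu=m-n+1=(n-1)k/2$.

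Second, I would evaluate the two quantities. Substituting $\mu=(n-1)k/2$ into $l_{n,m}=\frac{1}{2}\big(\frac{\mu^2}{n-1}-\mu\big)$ and simplifying yields $l_{n,m}=\frac{(n-1)k(k-2)}{8}$. For $\cap(G)$ I would use that $u$ being universal means $G$ admits a star spanning tree centered at $u$; by Corollary \ref{coro:star_solves_mstci} that tree solves the MSTCI problem, and by Lemma \ref{lemma:star_formula} one gets $\cap(G)=\sum_{w\in V-\{u\}}\binom{d(w)-1}{2}=(n-1)\binom{k}{2}=\frac{(n-1)k(k-1)}{2}$ (note $\cap(G)>0$ since $k\ge 4$, so the ratio is well defined).

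Third, the common factors $(n-1)$ and $k$ cancel and the ratio reduces to $\frac{l_{n,m}}{\cap(G)}=\frac{k-2}{4(k-1)}=\frac{1}{4}\big(1-\frac{1}{k-1}\big)$. This is a strictly increasing function of $k$, so on the range $k\ge 4$ it is at least its value at $k=4$, which is $\frac{1}{6}\ge\frac{1}{8}$, and it stays strictly below its limit $\frac{1}{4}$ as $k\to\infty$. That establishes $\frac{1}{8}\le\frac{l_{n,m}}{\cap(G)}\le\frac{1}{4}$.

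I do not expect a genuine obstacle: the argument is essentially bookkeeping. The only steps that deserve care are invoking the universal-vertex results (Corollary \ref{coro:star_solves_mstci} and Lemma \ref{lemma:star_formula}) to obtain a closed form for $\cap(G)$ instead of reasoning about arbitrary spanning trees, carrying out the algebraic simplifications without slips, and using monotonicity in $k$ to pin down the numerical constants $\frac{1}{8}$ and $\frac{1}{4}$.
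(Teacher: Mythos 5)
Your proof is correct and follows essentially the same route as the paper: the universal vertex gives a star spanning tree that solves MSTCI, the star formula and the handshake identity turn $\cap(G)$, $m$, $\mu$ and $l_{n,m}$ into closed forms in $n$ and $k$, and the ratio is then a monotone function of $k$ alone. The one divergence is a degree convention: reading the hypothesis literally you take $d_G(v)=k+1$ for $v\neq u$ and obtain the ratio $\frac{k-2}{4(k-1)}\in\left[\frac{1}{6},\frac{1}{4}\right)$, whereas the paper's own computation uses $d_G(v)=k$ (i.e.\ treats $k$ as the degree in $G$ rather than in $G-\{u\}$) and gets $\frac{k-3}{4(k-2)}\in\left[\frac{1}{8},\frac{1}{4}\right)$, which is what makes the stated constant $\frac{1}{8}$ tight at $k=4$; under your (arguably more faithful) reading the claimed inequality still holds, just with slack in the lower bound.
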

\begin{proof}
As $d(v) = k$ $\forall v \in 
V-\{u\}$, and the star spanning tree is a solution 
of the MSTCI problem then according to the 
intersection number formula,
$$\cap(G) = \sum_{v \in V-\{u\}} \binom{d(u) - 1}{2} =
\sum_{v \in V-\{u\}} \binom{k - 1}{2} = \frac{(n-1) (k-1)(k-2)}{2}$$
Taking into account that,
$$m = \frac{1}{2} \sum_{v \in V} d(v) = 
\frac{1}{2}\left((n-1) + (n-1) k\right) = \frac{(n-1)(k+1)}{2}$$
Then,
$$\mu = m - (n - 1) = \frac{(n-1)(k+1)}{2} - (n - 1) 
= \frac{(n-1)(k-1)}{2}$$

Next, we analyze the $l_{n,m}$,
$$l_{n,m} = \frac{1}{2} \left(\frac{\mu^2}{n-1} - \mu\right) = 
\frac{(n-1)(k-1)(k-3)}{8}$$
and finally we express the quotient,
$$\frac{\frac{1}{2} (\frac{\mu^2}{n-1} - 
\mu)}{\cap(G)} = \frac{k-3}{4(k-2)}$$

This implies that when $k=4$ and 
$k \rightarrow \infty$ the lower and upper bounds 
are met.
\end{proof}
\bigskip

This last result expresses two facts: 1) the 
underestimation of $l_{n,m}$ can be considerable and 
2) $l_{n,m}$ seems to perform 
better in dense graphs. In the next subsection 
we will confirm this in the case of small connected 
graphs. As the quotient considered 
in the previous lemma will be referenced below, we will 
denote it, 

$$r_G := \frac{l_{n,m}}{\cap(G)}$$

\subsection{Experimental analysis}

In this part we confirm -in line with the previous 
subsection- that, although $l_{n,m}$ is interesting 
from a theoretical perspective, it considerably 
underestimates the intersection number. We designed an 
experiment that exhaustively analyzed the set of 8-node 
connected graphs. Figure \ref{fig:lower_bound_ratio} 
expresses the results that can be summarized as follows,

\begin{itemize}
	\item $l_{n,m}$ is at most $\frac{1}{5} \cap(G)$.
	\item minimum $\cap(G)$ (red line) is 
	increasing -this will be related with a conjectural 
	result in the next subsection.
	\item maximum $\cap(G)$ (blue line) is not increasing; in 
	particular is decreasing for $m=24$ this is 
	precisely the set of graphs that contains the ``last'' graph 
	without a universal vertex (see Lemma 
	\ref{lemma:dense_graphs}). In the next section we will see 
	that this densest graph without a universal vertex achieves 
	the maximum $\cap(G)$ for $m=24$.
	\item $K_8$ has maximum intersection number; it seems 
	reasonable to consider that $K_n$ has maximum intersection 
	number for $n$-node connected graphs.
\end{itemize}
 
\begin{figure}[H]
	\centering
	\includegraphics[scale = 0.39]{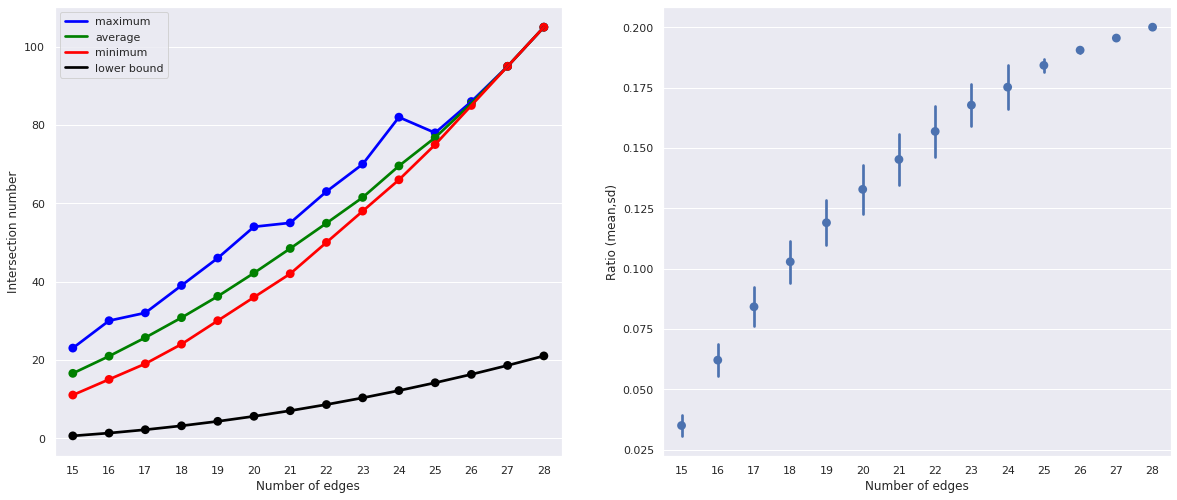}
	\caption{$8$-node connected graphs maximum $\cap(G)$, 
	minimum $\cap(G)$, 
	mean $\cap(G)$ and $l_{n,m}$ interpolating lines (left); 
	mean/standard deviation $r_G$ (right).}
	\label{fig:lower_bound_ratio}
\end{figure}

Important remark: recall that the intersection number is 
the number of pairwise intersections of a solution of 
the MSTCI problem. 
Therefore, a graph with maximum $\cap(G)$ for a fixed $m$ is a 
graph such that its intersection number is greater than or 
equal to the intersection number of any other graph of $m$
 nodes. This should not be confused with the number of 
 pairwise intersections of the problem of finding a spanning 
 tree that maximizes such number.

\subsection{A conjectural lower bound}

In this part we conjecture an improved lower bound based on 
experimental results. 

\medskip

First we focus on graphs 
$G=(V,E)$ with a universal vertex $u \in V$. Recall from 
Theorem \ref{teo:universal_vertex} that the star spanning 
tree $T_s$ is a solution of the MSTCI problem. 
Note that the sum of the degrees of the 
subgraph $G' = G - \{u\}$ verify,

$$\sum_{v \in G'} d(v) = 2 \mu$$

 The following equivalent definitions refer to those graphs with 
 a universal vertex that minimize the intersection number 
 for a fixed number of vertices and edges. The interpretation 
 is simply the equidistribution of the total degree among 
 the remaining $n-1$ vertices.

\begin{definition}
	Let $G=(V,E)$ be a graph with a universal 
	vertex $u \in V$ and $2 \mu = q (n-1) + r$ the 
	integer division of $2 \mu$ and $n-1$, we shall say 
	that $G$ is \emph{$\mu$-regular} if the degree 
	of every node $v \in V - \{u\}$ is $q+1$  or $q+2$.
\end{definition} 

\begin{definition}
	Let $G=(V,E)$ be a graph with a universal 
	vertex $u \in V$ and $2 \mu = q (n-1) + r$ the 
	integer division of $2 \mu$ and $n-1$, we shall say 
	that $G$ is \emph{$\mu$-regular}  if -except 
	for $u$- has exactly $n-1-r$ nodes with degree $q+1$ and 
	$r$ nodes with degree $q+2$.
\end{definition} 

It is simple to check the equivalence based on 
the uniqueness of $q$ and $r$. Note that for every 
$|V| = n$ and $|E| = m$ there are $\mu$-regular 
graphs; the proof is straightforward by induction on 
$\mu$. The following lemma shows that 
$\mu$-regular graphs minimize the intersection 
number of graphs with a universal vertex.

\begin{lemma}\label{lemma:mu_equidistributed}
	Let $G=(V_G,E_G)$ be a $\mu-regular$ graph and 
	$H=(V_H, E_H)$ be a non $\mu-regular$ graph  
	with a universal vertex such that $|V_G| = |V_H| = n$ and 
	$|E_G| = |E_H| = m$, then 

$$\cap(G) < \cap(H)$$
\end{lemma}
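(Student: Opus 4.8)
The plan is to reduce the statement to an elementary extremal fact about integer sequences with a prescribed sum, using the two cited results on star spanning trees. Since both $G$ and $H$ contain a universal vertex $u$, Corollary \ref{coro:star_solves_mstci} says that the star spanning tree solves the MSTCI problem in each of them, and Lemma \ref{lemma:star_formula} gives
\[
\cap(G) = \sum_{v \in V_G - \{u\}} \binom{d(v)-1}{2}, \qquad \cap(H) = \sum_{v \in V_H - \{u\}} \binom{d(v)-1}{2}.
\]
Setting $e_v := d(v)-1$ for the non-universal vertices, one has $e_v \ge 0$ and $\sum_v e_v = 2\mu$ in both graphs, because deleting the universal vertex removes exactly $n-1$ edges and leaves $\mu = m-(n-1)$. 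So it suffices to prove the following: among all sequences of $n-1$ non-negative integers with fixed sum $2\mu$, the quantity $\sum_v \binom{e_v}{2}$ attains its minimum uniquely (up to reordering) at the \emph{balanced} sequence, i.e. the one whose entries are $q$ or $q+1$ where $2\mu = q(n-1)+r$ with $0\le r<n-1$; and that after the shift $e_v = d(v)-1$ this balanced sequence is precisely the $\mu$-regular degree sequence $d(v)\in\{q+1,q+2\}$.

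First I would record the exchange inequality. If a sequence has two entries with $e_i \ge e_j + 2$, replacing the pair $(e_i,e_j)$ by $(e_i-1,\,e_j+1)$ keeps the sum fixed and changes $\sum\binom{\cdot}{2}$ by $e_j - e_i + 1 \le -1$, hence strictly decreases it; this is the discrete form of the convexity of $t\mapsto\binom{t}{2}$, and is a one-line computation using $\binom{a}{2}-\binom{a-1}{2}=a-1$. Note the modified entries stay non-negative since $e_i-1\ge e_j+1\ge 1$. Iterating this move from any starting sequence terminates (the objective is a strictly decreasing non-negative integer at each step), and it can stop only at a sequence all of whose entries differ by at most $1$ — which is the balanced sequence, unique up to permutation for a given length and sum. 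Hence the balanced sequence is the unique minimizer.

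Next I would check that ``non-$\mu$-regular with a universal vertex'' is exactly the statement that the non-universal degree multiset is not the balanced one. If every non-universal vertex of $H$ had degree in $\{q+1,q+2\}$, then $H$ would be $\mu$-regular by definition, a contradiction; so $H$ has a non-universal vertex with $e_v \le q-1$ or $e_v \ge q+2$. A short counting argument, using that the $e_v$ sum to $q(n-1)+r$ with $0\le r<n-1$, then forces $H$ to have a non-universal vertex on the other side of the window $\{q,q+1\}$ as well, so $H$'s sequence admits a pair $e_i \ge e_j + 2$. Consequently $\sum_v\binom{e_v}{2}$ evaluated on $H$ is strictly larger than its minimum value, which is attained on the $\mu$-regular graph $G$. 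This yields $\cap(G) < \cap(H)$, as claimed.

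The routine-but-fiddly part will be the bookkeeping of the third paragraph: verifying cleanly that failing to be $\mu$-regular forces an out-of-range degree on each side of the window $\{q,q+1\}$ (equivalently, a pair differing by at least two), and that the balanced sequence really is the $\mu$-regular one after the shift $e_v=d(v)-1$. Both are elementary division-with-remainder arguments, and the only genuine content — the strict decrease under a balancing exchange — is the convexity computation of the second paragraph.
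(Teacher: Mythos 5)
Your proof is correct, and its engine --- the exchange inequality $\binom{e_i-1}{2}+\binom{e_j+1}{2} < \binom{e_i}{2}+\binom{e_j}{2}$ when $e_i \ge e_j+2$ --- is exactly the computation at the heart of the paper's proof. The organization, however, is genuinely different. The paper argues by contradiction: it passes to a graph $H$ of minimum intersection number among universal-vertex graphs with the given $n,m$, performs the exchange as an actual edge relocation $(v_{max},w)\mapsto(v_{min},w)$ inside the graph (which requires exhibiting a neighbor $w$ of $v_{max}$ not adjacent to $v_{min}$), and derives $\deg(v_{max})-\deg(v_{min})\le 1$, contradicting non-$\mu$-regularity. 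You instead work purely with the multiset $\{d(v)-1\}$ of shifted degrees, observe via Lemma \ref{lemma:star_formula} that $\cap$ depends only on this multiset, and run the exchange directly on integer sequences until the balanced sequence is reached. This buys you two things: you never need the exchange to be realizable as a graph modification (the intermediate sequences need not be degree sequences of universal-vertex graphs, since the sequence minimum is a priori a lower bound for the graph minimum, attained by $G$), and you avoid the paper's slightly delicate ``without loss of generality $H$ is a minimizer'' step, which sits uneasily with the standing hypothesis that $H$ is not $\mu$-regular. The only points to write out carefully are the ones you already flagged: that non-balanced forces a pair of entries differing by at least two (immediate, since max $-$ min $\le 1$ characterizes the balanced sequence for fixed length and sum), and that the balanced sequence corresponds, after the shift $e_v=d(v)-1$, to the $\mu$-regular degree pattern $\{q+1,q+2\}$ with multiplicities $n-1-r$ and $r$.
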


\begin{proof}
Suppose on the contrary that $\cap(H) \leq \cap(G)$. 
Let $u_H$ be a universal vertex of $H$. And -without loss of 
generality- let $H$ have a minimum intersection number; more 
precisely let $J = (V_{J}, E_{J})$ be a graph with a 
universal vertex such that $|V_{J}| = n$ and $|E_{J}| = m$ 
then the following holds,

$$\cap(H) \leq \cap(J)$$

By definition as $H$ is not $\mu$-regular then a 
node with maximum degree and a node with minimum degree 
$v_{max}, v_{min} \in V_H-\{u\}$ satisfy that 
$deg_H(v_{max}) - deg_H(v_{min}) >= 2$. Let 
$(v_{max},w) \in E_H$ be and edge such that $w \in V_H$ is 
not a neighbor of $v_{min}$. Consider the graph 
$H' = (V_H, E_H - \{(v_{max},w)\} \cup \{(v_{min},w)\})$. 
According to the hypothesis on $H$ and the formula presented
in Lemma \ref{lemma:star_formula} the following holds,

$$\cap(H) = \sum_{v \in V_H-\{u\}} \binom{d_H(v) - 1}{2} \leq \sum_{v \in V_H-\{u\}} \binom{d_{H'}(v) - 1}{2} = \cap(H')$$

As the degrees of all the nodes except for $v_{max}$ and 
$v_{min}$ coincide in $H$ and $H'$, the above inequality  
implies that,

$$\binom{d_H(v_{max}) - 1}{2} + \binom{d_H(v_{min}) - 1}{2} \leq 
\binom{d_{H'}(v_{max}) - 1}{2} + \binom{d_{H'}(v_{min}) - 1}{2}$$

Let $deg_H(v_{min}) - 1 = k$ and $deg_H(v_{max}) - 1 = k+t$, 
rewriting the inequality we have,

$$\binom{k+t}{2} + \binom{k}{2} \leq 
\binom{k + t - 1}{2} + \binom{k + 1}{2}$$

Expanding the binomials,

$$\frac{1}{2} [(k+t) (k+t-1) + k (k-1)] \leq 
\frac{1}{2} [(k+t-1) (k+t-2) + (k+1) k]$$

Which implies,

$$(k+t-1) - k \leq 0 \implies t \leq 1 \implies deg_H(v_{max}) - deg_H(v_{min}) \leq 1$$

This contradicts the hypothesis on $H$ and consequently 
proves the lemma. 
\end{proof}

Considering the general case of connected graphs 
-in an attempt to improve $l_{n,m}$- we decided to 
analyze those graphs that minimize the intersection 
number for each fixed $n$ and $m$. The experiment 
consisted of exhaustively checking the set of $7$ 
and $8$-node connected graphs. The interesting 
result was that in this general setting, 
$\mu$-regular graphs also minimize the 
intersection number; 
although not exclusively: in some cases there are 
other graphs -without a universal vertex- that 
achieve the minimum intersection number. We 
validated this fact by considering a sample of 1000 
randomly generated (by a uniform distribution)  
$9$-node connected graphs. This evaluation resulted 
positive in all cases, and consequently enables 
to formulate the following conjecture on a firm 
basis,

\begin{conjecture}\label{conj:mu_equidistributed}
	Let $G=(V_G,E_G)$ be a $\mu-regular$ graph and 
	$H=(V_H, E_H)$ be a connected graph such that 
	$|V_G| = |V_H| = n$ and $|E_G| = |E_H| = m$, then 

$$\cap(G) \leq \cap(H)$$
\end{conjecture}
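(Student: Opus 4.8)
The plan is to reduce the conjecture to the already-proven statement for graphs with a universal vertex (Lemma~\ref{lemma:mu_equidistributed}) by constructing, from an arbitrary connected graph $H$ with parameters $(n,m)$, a graph $H^\star$ on the same parameters that \emph{does} have a universal vertex and whose intersection number is no larger than that of $H$, i.e.\ $\cap(H^\star) \le \cap(H)$. Once that is in hand, if $G$ is $\mu$-regular we have $\cap(G) \le \cap(H^\star) \le \cap(H)$ by combining Lemma~\ref{lemma:mu_equidistributed} (in the case $H^\star$ is not $\mu$-regular) or trivially (if $H^\star$ happens already to be $\mu$-regular, since any two $\mu$-regular graphs on the same $(n,m)$ have the same degree sequence on $V-\{u\}$ and hence, by Lemma~\ref{lemma:star_formula}, the same intersection number). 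So the whole conjecture hinges on the monotonicity claim: adding edges to make a vertex universal, while deleting edges elsewhere to preserve $m$, cannot increase $\cap$.

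The key steps, in order, would be: (i) Fix a solution tree $T$ of the MSTCI problem for $H$ and pick the vertex $v$ of maximum degree in $T$; intuitively we want to ``rotate'' $H$ so that $v$ becomes universal. (ii) Describe a sequence of local moves — each move either adds an edge $(v,w)$ for a non-neighbor $w$ of $v$ and simultaneously deletes some edge $(x,y)$ with $x,y \ne v$, or more carefully is an edge-swap that strictly increases $\deg(v)$ — and argue that after finitely many moves $v$ is universal. (iii) Show each move does not increase $\cap$. For this one would want to exhibit, after each move, a spanning tree of the new graph whose number of pairwise cycle intersections is at most $\cap_H(T)$; the natural candidate is to keep $T$ itself when possible, or to perform a compensating tree edge-swap, and then bound the change in tree-cycle intersections using the bond machinery of Section~2 (Lemmas~\ref{lemma:bond_pairwise_intersect}--\ref{lemma:bound}). (iv) Conclude $\cap(H^\star) \le \cap_H(T) = \cap(H)$, and finish as above.

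The hard part will be step (iii): controlling how $\cap$ changes under a single edge-swap when the spanning tree is not fixed. Deleting an edge $(x,y)$ can only remove intersections, but it may force a different (worse) choice of spanning tree for the modified graph, and adding $(v,w)$ creates a new tree-cycle that may intersect many existing ones. The delicate point is that the \emph{solution} tree can change discontinuously, so a purely local ``the new cycle intersects at most as much as the old one'' argument is not obviously available. One promising route is to not track the optimal tree at all but instead to argue at the level of the lower-bound/bond quantities: show that the swap does not increase $\min_T \sum_{e}\binom{|\bar b_e|-1}{2}$-type expressions, or better, directly construct a near-star tree for $H^\star$ and compare term-by-term via the convexity of $x \mapsto \binom{x}{2}$ exactly as in Lemma~\ref{lemma:mu_equidistributed}. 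Indeed, Section~4 of the paper (which compares $\cap$ of graphs differing in one edge) presumably supplies precisely the inequality needed to make each move monotone; the cleanest write-up would invoke those Section~4 results as a black box, reducing this conjecture to: \emph{(a)} every connected graph on $(n,m)$ can be transformed into one with a universal vertex by a sequence of $\cap$-non-increasing single-edge modifications, plus \emph{(b)} Lemma~\ref{lemma:mu_equidistributed}. If, however, the Section~4 comparison only goes one way (e.g.\ gives $\cap(G) \le \cap(G+e)$ but not a usable bound under swaps), then step (iii) genuinely requires a new argument and is the crux of the problem — which is consistent with the authors leaving it as a conjecture rather than a theorem.
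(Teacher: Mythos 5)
This statement is left as a conjecture in the paper: the authors support it only with exhaustive checks on $7$- and $8$-node graphs and a random sample of $9$-node graphs, and offer no proof. So there is nothing in the paper to compare your argument against; the question is only whether your proposal closes the gap the authors could not. It does not, and to your credit you say so yourself: step (iii) --- that each edge-swap toward a universal vertex is $\cap$-non-increasing --- is exactly the content of the conjecture in disguise. Combined with Lemma~\ref{lemma:mu_equidistributed}, the claim ``every connected graph on $(n,m)$ admits a universal-vertex graph $H^\star$ on the same $(n,m)$ with $\cap(H^\star)\le\cap(H)$'' is equivalent to what you are trying to prove, so the reduction relocates the difficulty rather than resolving it.

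Moreover, the hope that Section~4 supplies the needed monotonicity as a black box is contradicted by that very section. Lemma~\ref{lemma:decreasing_intersection_number} gives $\cap(G)\le\cap(H)$ only when the deleted edge is a \emph{non-tree} edge of some optimal tree of $H$, and Table~\ref{tab:counterxample_description} exhibits six $8$-node graphs whose intersection number strictly exceeds that of \emph{all} their successors --- so single-edge addition can decrease $\cap$, and single-edge deletion can increase it. A swap (add $(v,w)$, delete $(x,y)$) composes one step of each sign, and nothing in the paper controls the net effect; the optimal tree can indeed change discontinuously, as you note. Your observation that all $\mu$-regular graphs on fixed $(n,m)$ share the same intersection number (via Lemma~\ref{lemma:star_formula}) is correct and would make the final step of the reduction work, but the crux --- a monotone path of edge modifications from an arbitrary connected graph to a universal-vertex graph --- remains unproven, and the experimental evidence in Section~4 suggests any such argument must be substantially more global than a move-by-move comparison.
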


Based on the intersection number formula of Lemma 
\ref{lemma:star_formula}, this conjecture implies the 
following improved -and tight- lower bound of the 
intersection number.

\begin{corollary}\label{conj:lower_bound}
	Let $G=(V,E)$ be a connected graph and 
	$2 \mu = q (n-1) + r$ the integer division of $2 \mu$ 
	and $n-1$, then 

$$\bar l_{n,m} := (n-1-r) \binom{q}{2} + r \binom{q+1}{2} = (n-1) \binom{q}{2} + q \ r\leq \cap(G)$$
\end{corollary}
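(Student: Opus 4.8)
The plan is to derive Corollary~\ref{conj:lower_bound} as a direct consequence of Conjecture~\ref{conj:mu_equidistributed} combined with the star-formula of Lemma~\ref{lemma:star_formula}. First I would observe that by Conjecture~\ref{conj:mu_equidistributed}, for any connected graph $G$ with $|V|=n$ and $|E|=m$, we have $\cap(G_0) \leq \cap(G)$ whenever $G_0$ is a $\mu$-regular graph on the same parameters; since such a $G_0$ always exists (noted just before the conjecture), it suffices to evaluate $\cap(G_0)$ for one particular $\mu$-regular graph. A $\mu$-regular graph has a universal vertex, so by Corollary~\ref{coro:star_solves_mstci} the star spanning tree $T_s$ solves the MSTCI problem on it, and Lemma~\ref{lemma:star_formula} gives $\cap(G_0) = \sum_{v \in V - \{u\}} \binom{d(v)-1}{2}$, where $u$ is the universal vertex.

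Next I would plug in the degree sequence prescribed by the second (equivalent) definition of $\mu$-regularity: exactly $n-1-r$ of the non-universal vertices have degree $q+1$ and exactly $r$ of them have degree $q+2$, where $2\mu = q(n-1)+r$ is the integer division. Substituting these into the star formula yields
\[
\cap(G_0) = (n-1-r)\binom{(q+1)-1}{2} + r\binom{(q+2)-1}{2} = (n-1-r)\binom{q}{2} + r\binom{q+1}{2},
\]
which is exactly the claimed $\bar l_{n,m}$. The final algebraic identity $(n-1-r)\binom{q}{2} + r\binom{q+1}{2} = (n-1)\binom{q}{2} + qr$ follows immediately from $\binom{q+1}{2} - \binom{q}{2} = q$, a one-line expansion. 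Chaining the two facts, $\bar l_{n,m} = \cap(G_0) \leq \cap(G)$, completes the proof.

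There is essentially no technical obstacle here, since the corollary is stated as a consequence of a conjecture; the only genuine content is that a $\mu$-regular graph automatically possesses a universal vertex (so that the star formula applies) and that its degree sequence is the one counted by $\bar l_{n,m}$. The one point worth stating carefully is the existence of a $\mu$-regular graph on any admissible $(n,m)$ with $\mu \geq 0$ — it is asserted before the conjecture to follow by induction on $\mu$ — so that the chain $\bar l_{n,m} = \cap(G_0) \leq \cap(G)$ is not vacuous. Tightness is then clear: equality holds whenever $G$ is itself $\mu$-regular, which is why the bound is described as tight.
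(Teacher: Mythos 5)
Your proposal is correct and takes essentially the same route the paper intends: reduce to a $\mu$-regular graph via Conjecture~\ref{conj:mu_equidistributed} (which exists for every admissible $(n,m)$), then evaluate the star formula of Lemma~\ref{lemma:star_formula} on the degree sequence with $n-1-r$ vertices of degree $q+1$ and $r$ of degree $q+2$. The paper states this derivation in a single sentence without a displayed proof, and your write-up fills in exactly those steps.
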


\subsection{Comparison between $l_{n,m}$ and $\bar l_{n,m}$}

The following lemma expresses an improvement of $\bar l_{n,m}$
 with respect to $l_{n,m}$.

\begin{lemma}\label{lemma:bound_comparison}
	Let $m > 2(n-1)$ then, 
$$\bar l_{n,m} > 2 \ l_{n,m}$$
\end{lemma}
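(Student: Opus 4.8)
The plan is to prove $\bar l_{n,m} > 2\,l_{n,m}$ by expressing both bounds in terms of $\mu$, $n-1$, $q$, and $r$, and comparing them directly. First I would recall that $2\mu = q(n-1) + r$ with $0 \le r < n-1$, so that $q = \lfloor 2\mu/(n-1)\rfloor$ and $\mu = \tfrac{1}{2}(q(n-1)+r)$. Using the closed form from Corollary~\ref{conj:lower_bound}, write $\bar l_{n,m} = (n-1)\binom{q}{2} + qr = \tfrac{1}{2}(n-1)q(q-1) + qr$. On the other side, $2\,l_{n,m} = \tfrac{\mu^2}{n-1} - \mu$. I would substitute $\mu = \tfrac{1}{2}(q(n-1)+r)$ into this expression to get everything over a common denominator of $4(n-1)$: a short computation gives $2\,l_{n,m} = \tfrac{1}{4(n-1)}\big((q(n-1)+r)^2 - 2(n-1)(q(n-1)+r)\big)$.

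Next I would clear denominators: the claimed inequality $\bar l_{n,m} > 2\,l_{n,m}$ is equivalent (after multiplying both sides by $4(n-1) > 0$) to
\[
2(n-1)^2 q(q-1) + 4(n-1)qr > (q(n-1)+r)^2 - 2(n-1)(q(n-1)+r).
\]
I would expand the right-hand side as $q^2(n-1)^2 + 2qr(n-1) + r^2 - 2q(n-1)^2 - 2r(n-1)$ and collect all terms on the left. The $q^2(n-1)^2$ terms and the $-2q(n-1)^2$ terms should cancel against corresponding pieces, leaving an inequality of the form $2qr(n-1) + 2r(n-1) - r^2 > 0$, i.e. $r\big(2q(n-1) + 2(n-1) - r\big) > 0$. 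Since $0 \le r < n-1$ we have $2(n-1) - r > n-1 > 0$, so the parenthesized factor is strictly positive; thus the inequality holds whenever $r > 0$.

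The main obstacle is the boundary case $r = 0$, where the above argument only yields $\bar l_{n,m} = 2\,l_{n,m}$ rather than a strict inequality, so the stated lemma as written would fail. Here I would use the hypothesis $m > 2(n-1)$, equivalently $\mu > n-1$ (by Lemma~\ref{lemma:bound_geq_than_0}), which forces $q \ge 2$ when $r = 0$ (since $2\mu = q(n-1) > 2(n-1)$ gives $q > 2$, so in fact $q \ge 3$). With $r = 0$ the inequality reduces to $\tfrac{1}{2}(n-1)q(q-1) > \tfrac{1}{2}(n-1)q(q-2)$, which is $(n-1)q > 0$ — true. So even when $r = 0$ strict inequality holds as long as $q \ge 1$, and $q \ge 1$ is guaranteed by $\mu \ge 1$; the case analysis collapses and the hypothesis $m > 2(n-1)$ is what rules out the degenerate situation $\mu = 0$ (hence $q = r = 0$) in which both bounds vanish. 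I would therefore present the proof as: reduce to the polynomial inequality $r(2q(n-1)+2(n-1)-r) + (n-1)q \cdot \mathbbm{1}[\text{comparison of } q(q-1) \text{ vs } q(q-2)] > 0$ cleanly by keeping the $\binom{q}{2}$ versus the quadratic-in-$\mu$ difference exact, verify the algebra once, and invoke $0 \le r < n-1$ together with $q \ge 1$ (from $m > 2(n-1)$) to finish.
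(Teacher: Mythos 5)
Your overall strategy --- clear the denominator $4(n-1)$ and compare the two bounds as polynomials in $q$, $r$, $n-1$ --- is a genuinely different route from the paper, which instead bounds the \emph{ratio} $\bar l_{n,m}/l_{n,m}$ from below by replacing $l_{n,m}$ with the larger quantity $\tfrac{1}{2}\mu^2/(n-1)$ and computing $4 - \tfrac{2(n-1)}{\mu} + \tfrac{r}{\mu^2}(n-1-r) > 2$ using $\mu > n-1$. Your direct subtraction is arguably cleaner and, done correctly, proves more: it shows the inequality holds for every $\mu>0$, not only for $m>2(n-1)$.

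However, the algebra as you present it is wrong at the decisive step. The left-hand side $2(n-1)^2q(q-1)+4(n-1)qr$ contributes $2(n-1)^2q^2$, while the right-hand side contributes only $(n-1)^2q^2$, so the quadratic terms do \emph{not} cancel; a positive $(n-1)^2q^2$ survives. The correct identity is
\[
4(n-1)\bigl(\bar l_{n,m}-2\,l_{n,m}\bigr)=(n-1)^2q^2+2(n-1)qr+2(n-1)r-r^2=4\mu^2+2r(n-1-r),
\]
which is strictly positive whenever $\mu>0$ (using $0\le r<n-1$), with no case split at all. Your perceived ``obstacle at $r=0$'' is therefore an artifact of the dropped term, and your patch for that case contains a second slip: with $r=0$ one has $2\,l_{n,m}=\tfrac14(n-1)q(q-2)$, not $\tfrac12(n-1)q(q-2)$ (the difference from $\bar l_{n,m}=\tfrac12(n-1)q(q-1)$ is $\tfrac14(n-1)q^2$, still positive, so your conclusion happens to survive). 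Both errors are ``conservative,'' so the lemma is not endangered, but the proof as written --- in particular the final sentence with the indicator-function expression --- does not assemble into a correct argument without redoing the expansion. Fix the cancellation, state the displayed identity, and the proof is complete in one line.
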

\begin{proof}
	The definition of $\mu$-regular graphs implies the following,

$$\frac{2 \mu - r}{n-1} = q$$

Substituting this expression in the definition of $\bar l_{n,m}$ 
we have,

$$\bar l_{n,m} = (n-1) \binom{q}{2} + q \ r  = (n-1) \frac{q \ (q-1)}{2} + q \ r\implies $$

$$\bar l_{n,m} = \frac{(n-1)}{2} \left[ \left(\frac{2 \ \mu - r}{n-1}\right)^2 - \frac{2 \ \mu - r}{n-1}\right] + r \ \frac{2 \ \mu - r}{n-1}$$

Consider the quotient,

$$\frac{\bar l_{n,m}}{l_{n,m}} = 
\frac{\bar l_{n,m}}{\frac{1}{2} \left(\frac{\mu^2}{n-1} - \mu\right)} >
\frac{\bar l_{n,m}}{\frac{1}{2} \left(\frac{\mu^2}{n-1} \right)} = 
\frac{2 (n-1) \ \bar l_{n,m}}{\mu^2} =
\frac{1}{\mu^2} \left[ (2 \mu - r)^2 - (n-1) (2 \mu - r) + 2 r (2 \mu - r) \right] =
$$

$$\frac{1}{\mu^2} \left[ 4 \mu^2 - 4 r \mu + r^2 - 2 (n-1) \mu + r (n-1) + 4 r \mu - 2 r^2 \right] = 
\frac{1}{\mu^2} \left[ 4 \mu^2 - r^2 - 2 (n-1) \mu + r (n-1) \right] =  
$$

$$
4 - \frac{r^2}{\mu^2} - \frac{2 (n-1)}{\mu} + \frac{r}{\mu^2} (n-1) = 
4 - \frac{2 (n-1)}{\mu} + \frac{r}{\mu^2} (n - 1 - r)
$$

As $0 \leq r < n-1$ and $m > 2 (n-1) \implies \mu > (n-1)$ the 
last term is equal or greater than zero then,

$$\frac{\bar l_{n,m}}{l_{n,m}} > 4 -  \frac{2 (n-1)}{n-1} = 2$$

\end{proof}

Note that $\bar l_{n,m}$ has not a lower restriction -as $l_{n,m}$-, 
meaning that it can be applied to any connected graph. Figure 
\ref{fig:lower_bound_comparison} compares 
$\bar r_G := \frac{\bar l_{n,m}}{\cap(G)}$ and $r_G$ based on 
the experiment. In practice, it seems that 
there is a better performance improvement than that of Lemma 
\ref{lemma:bound_comparison}; 
roughly speaking $\bar l_{n,m}$ is at least $\frac{1}{2}\cap(G)$ 
while $l_{n,m}$ is at most $\frac{1}{5}\cap(G)$. 
It seems that $\bar l_{n,m}$ also performs better in dense graphs.

\begin{figure}[H]
	\centering
	\includegraphics[scale = 0.5]{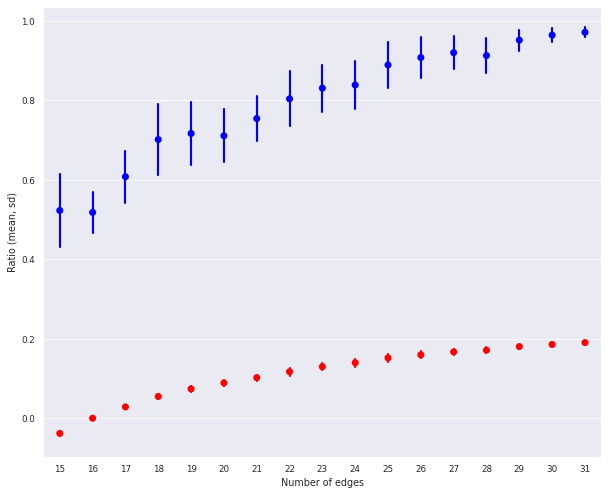}
	\caption{Comparison of mean/standard deviation of 
	$\bar r_G$ (blue) and mean/standard deviation of $r_G$ 
	(red) over a random sample of size 1000 of 
	$9$-node connected graphs.}
	\label{fig:lower_bound_comparison}
\end{figure}

Another possible comparison between $\bar l_{n,m}$ and $l_{n,m}$ 
is the following: as the graphs of the hypothesis of 
Lemma \ref{lemma:bound_regular_graph} are 
$\mu$-regular with $r=0$, then 
$\bar r_G = 1$ and consequently the bound over this 
family is tight. This is much better than the performance 
of $l_{n,m}$ ($r_G \leq 1/4$). 

% Induccion para ver que para toda secuencia válida de 
% grados para grafos con un nodo universal existe un grafo 
% que la realiza
% Las secuencias válidas son (n-1, 1, ..., 1) + 2mu (cada 
% nuevo eje que se agrega incrementa en 1 el grado de dos 
% nodos)

% Cuenta para ver que los grafos con un nodo universal que 
% además son equilibrados, minimizan el nro de intersección
% (suponer que no y considerar  2 dos nodos: uno con grado 
% máximo y uno con grado mínimo y ver que si se decrementa en 
% 1 el grado del máximmo y se incrementa en 1 el grado del 
% mínimo, el nro de intersección disminuye)

\section{The intersection number of graphs that differ in one edge}

We already suggested in the previous section that the 
intersection number is not increasing w.r.t. the number of 
edges. In an attempt to understand the MSTCI problem of 
closely related graphs, it seems natural to consider the 
question of adding an edge.
 More specifically, given two connected graphs $G=(V,E)$ and 
 $G'=(V, E \cup \{e\})$ such that $e \notin E$, then could we compare 
 their intersection numbers? A first guess indicates that adding an 
 edge -and consequently a corresponding tree cycle- should increase 
 the intersection number. In this section we consider,
 
 \begin{question}\label{question:successors}
	Does a connected graph $G=(V,E)$ exist such that for any 
	graph $G'=(V, E \cup \{e\})$ ($e \notin E$), $\cap(G') 
	< \cap(G)$?
\end{question}

The first part presents a lemma that connects the 
intersection number of a graph with some of its subgraphs. 
The second introduces the definition of \emph{successor} and 
a strict condition for a connected graph to have a greater 
intersection number than all its successors.
The last part shows that in a relatively small set of 
graphs the initial guess does not hold.

\subsection{Intersection number of certain subgraphs}

The following important lemma enables comparing the 
intersection number of a graph and certain subgraphs,

\begin{lemma}\label{lemma:intersection_subgraphs}
	Let $G=(V,E)$ and $G'=(V', E')$ be connected graphs 
	such that:
\begin{itemize}
	\item $G'$ is a subgraph of $G$.
	\item $T$ is a solution of the MSTCI w.r.t. $G$.
	 \item  $T' = T \cap G'$ is a spanning tree of $G'$
	 (where $T \cap G'$ is the intersection considered 
	 as subgraphs of $G$).
\end{itemize} 
	then $\cap(G') \leq \cap(G)$

\end{lemma}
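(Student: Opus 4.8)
The plan is to reduce the statement to a comparison of the two \emph{fixed} spanning trees $T$ and $T'$. Since $T'$ is a spanning tree of $G'$, the minimality defining $\cap(G')$ gives $\cap(G') \le \cap_{G'}(T')$, so it suffices to prove $\cap_{G'}(T') \le \cap_G(T) = \cap(G)$. The heart of the argument is to show that $T'$ and $T$ induce literally the same tree-cycle for each cycle-edge of $G'$, after which the inequality is pure bookkeeping.

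First I would pin down the cycle-edges. Because $T' = T \cap G'$ as subgraphs and $E' \subseteq E$, an edge of $G'$ lies in $T'$ if and only if it lies in $T$; hence $G' - T' = E' \setminus E(T) \subseteq E \setminus E(T) = G - T$, i.e. every cycle-edge of $G'$ w.r.t.\ $T'$ is also a cycle-edge of $G$ w.r.t.\ $T$. Next, the key step: for a cycle-edge $f = (u,v) \in G' - T'$, the path $uT'v$ is a path between $u$ and $v$ all of whose edges lie in $T' \subseteq T$, so by uniqueness of paths in the tree $T$ it must coincide with $uTv$. Therefore the tree-cycle $uT'v \cup \{f\}$ equals the tree-cycle $uTv \cup \{f\}$. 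Consequently, for any two distinct cycle-edges $f_1, f_2 \in G' - T'$, the intersection of their tree-cycles computed in $G'$ w.r.t.\ $T'$ is exactly the intersection computed in $G$ w.r.t.\ $T$; in particular one is non-empty precisely when the other is.

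To conclude, note that the set of unordered pairs of cycle-edges of $G'$ w.r.t.\ $T'$ is a subset of the set of unordered pairs of cycle-edges of $G$ w.r.t.\ $T$, and by the previous step each pair contributing a non-empty intersection to $\cap_{G'}(T')$ contributes the same non-empty intersection to $\cap_G(T)$. Hence $\cap_{G'}(T') \le \cap_G(T) = \cap(G)$, and combined with $\cap(G') \le \cap_{G'}(T')$ this gives $\cap(G') \le \cap(G)$. I expect the only genuinely delicate point to be the identity $uT'v = uTv$: it relies on having $E(T') \subseteq E(T)$ (so the two paths must agree) together with $T'$ being a spanning tree of $G'$ (so $uT'v$ is defined for every edge $(u,v)$ of $G'$); the remainder is routine counting.
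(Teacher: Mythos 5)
Your proof is correct and follows essentially the same route as the paper: both reduce to comparing $\cap_{G'}(T')$ with $\cap_G(T)$ by observing that each cycle-edge of $G'$ induces the identical tree-cycle with respect to $T'$ and $T$, so the non-empty intersections counted for $T'$ form a subset of those counted for $T$. Your write-up actually supplies the justification (uniqueness of paths in a tree together with $E(T')\subseteq E(T)$) that the paper leaves as an unproved "note that", so nothing is missing.
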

\begin{proof}
	Note that the edges $e \in E' - T'$ are cycle-edges 
	that determine the same tree-cycles w.r.t. $T'$ and $T$ 
	in $G'$ and $G$, resp. Consequently the pairwise 
	intersections between those tree-cycles are the same. Then 
	the following holds,

$$\cap(G') \leq \cap_{G'}(T') = \cap_{G}(T) - \sum_{e \in G - T - G'} \cap_T(c_e) = \cap(G) - \sum_{e \in G - T - G'} \cap_T(c_e) \leq \cap(G)$$

where $c_e$ is the tree-cycle induced by an edge $e$ and 
$\cap_T(c_e)$ are the cycle pairwise intersection of $c_e$.
\end{proof}

\begin{corollary}\label{coro:complete_graph_maximum}
	Let $G=(V,E)$ be a connected graph with a universal 
	vertex such that $|V| \leq n$ then $\cap(G) \leq 
	\cap(K_n)$
\end{corollary}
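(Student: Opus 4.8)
The plan is to realize $G$ as a subgraph of $K_n$ and then invoke Lemma \ref{lemma:intersection_subgraphs}, with the role of the MSTCI solution $T$ played by a star spanning tree of $K_n$.

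First I would let $v$ be a universal vertex of $G$, write $V = V(G)$, and fix a copy of $K_n$ whose vertex set contains $V$ (adjoining $n - |V|$ extra vertices if $|V| < n$). Then $G$ is a subgraph of $K_n$, so the first hypothesis of Lemma \ref{lemma:intersection_subgraphs} holds with $G' = G$ and $K_n$ playing the role of $G$ in that lemma.

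Second, I would take $T$ to be the star spanning tree of $K_n$ centred at $v$. By Corollary \ref{coro:star_solves_mstci} this $T$ is a solution of the MSTCI problem for $K_n$, which gives the second hypothesis. For the third hypothesis, observe that the intersection $T \cap G$ (considered as subgraphs of $K_n$) has vertex set $V$ and edge set consisting of exactly those star edges $(v,u)$ with $u \in V \setminus \{v\}$; since $v$ is universal in $G$, every one of these edges lies in $G$, so $T \cap G$ is the full star on $V$, which is a spanning tree of $G$.

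Finally, Lemma \ref{lemma:intersection_subgraphs} yields $\cap(G) \le \cap(K_n)$, as claimed. The only point requiring care is the verification of the third hypothesis — that $T \cap G$ is again a spanning tree — and this is precisely where universality of $v$ is used: it guarantees that none of the star edges incident to $v$ within $V$ is lost when passing from $K_n$ to $G$. Equivalently, one may split the argument into the two inclusions $G \subseteq K_{|V|} \subseteq K_n$ and apply Lemma \ref{lemma:intersection_subgraphs} twice (using universality of $v$ for the first inclusion and $K_{|V|} \subseteq K_n$ for the second). I do not expect any substantial obstacle; the corollary follows immediately from Lemma \ref{lemma:intersection_subgraphs} together with Corollary \ref{coro:star_solves_mstci}.
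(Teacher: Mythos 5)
Your proposal is correct and follows essentially the same route as the paper: realize $G$ as a subgraph of $K_n$, take the star spanning tree of $K_n$ centred at the universal vertex of $G$ (a MSTCI solution for $K_n$ by Theorem~\ref{teo:universal_vertex}), check that its intersection with $G$ is a spanning tree of $G$, and apply Lemma~\ref{lemma:intersection_subgraphs}. Your explicit verification that the star must be centred at the universal vertex of $G$ is a point the paper leaves implicit, but the argument is the same.
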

\begin{proof}
	Note that the hypothesis of the previous corollary are 
	met: $G$ is a subgraph of $K_n$, by Theorem 
	\ref{teo:universal_vertex} $T_s$ is a solution of the 
	MSTCI problem w.r.t. $K_n$, and $T_s \cap G$ is a spanning tree 
	of $G$, then

	$$\cap(G) \leq \cap(K_n)$$
	
\end{proof}

Note: the previous corollary can also be deduced by 
comparing the corresponding formulas described in
Lemma \ref{lemma:star_formula}, although the proof would 
be less conceptual.

\subsection{Intersection number of a graph and its successors}

In this section we will consider these two  
definitions.

\begin{definition}
	Let $G=(V,E)$ be a connected graph then a \emph{successor} 
	of $G$ is another graph $H = (V, E \cup \{e\})$ 
	with $e \notin E$.
\end{definition} 

And reciprocally,

\begin{definition}
	Let $H = (V, E)$ be a connected graph then a 
	\emph{predecessor} of $H$ is another connected graph 
	$G = (V, E - \{e\})$ for any edge $e \in E$.
\end{definition} 

Note that $K_n$ has no successors and if $G$ is a tree it has 
no predecessors.

\bigskip

The following lemma compares the intersection number of a 
graph with that of some of its predecessors. 

\begin{lemma}\label{lemma:decreasing_intersection_number}
	Let $H=(V,E)$ be a connected graph and $T$ a spanning 
	tree that is a solution of the MSTCI problem, 
	then for every predecessor $G=(V, E - \{e\})$ such that 
	$e \in E-T$ the following holds 

$$\cap(G) \leq \cap(H)$$
\end{lemma}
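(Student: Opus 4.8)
The plan is to apply Lemma~\ref{lemma:intersection_subgraphs} directly, since $G=(V,E-\{e\})$ is a subgraph of $H=(V,E)$. The hypotheses of that lemma require a solution $T$ of the MSTCI problem w.r.t.\ $H$ such that $T\cap G$ is a spanning tree of $G$. Here we are given precisely such a $T$, and since $e\in E-T$ the edge $e$ does not belong to $T$; hence removing $e$ from $H$ leaves $T$ untouched, i.e.\ $T\cap G = T$, which is a spanning tree of $G$ because $G$ still contains all of $V$ and all edges of $T$. All three bullet conditions of Lemma~\ref{lemma:intersection_subgraphs} are therefore met.

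First I would observe that $G$ is connected: it contains the spanning tree $T$, so it is connected on $V$. Second I would note that $G'$ in the statement of Lemma~\ref{lemma:intersection_subgraphs} should be instantiated as our $G$, and $G$ of that lemma as our $H$; the subgraph relation is clear since $E-\{e\}\subseteq E$ and the vertex sets coincide. Third I would verify $T\cap G = T$ as a subgraph equality: every edge of $T$ lies in $E-\{e\}$ precisely because $e\notin T$, so intersecting with $G$ removes nothing. Finally, invoking Lemma~\ref{lemma:intersection_subgraphs} yields $\cap(G)\le\cap(H)$, which is the claim.

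There is essentially no obstacle here — the lemma does all the work, and the only thing to check is the bookkeeping that $T$ survives the deletion of $e$, which is immediate from the assumption $e\in E-T$. The one subtlety worth stating explicitly is that the conclusion could fail if $e$ were a tree-edge: then $T\cap G$ would no longer span $G$ (indeed $G$ might be disconnected), and Lemma~\ref{lemma:intersection_subgraphs} would not apply. So the hypothesis $e\in E-T$ is exactly what makes the argument go through, and I would emphasize that point in the write-up.

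A compact proof would read: since $e\in E-T$, the spanning tree $T$ of $H$ is also a spanning tree of $G=(V,E-\{e\})$, and in the notation of Lemma~\ref{lemma:intersection_subgraphs} we have $T'=T\cap G=T$. As $G$ is a subgraph of $H$ and $T$ is a solution of the MSTCI problem w.r.t.\ $H$, Lemma~\ref{lemma:intersection_subgraphs} gives $\cap(G)\le\cap(H)$. One may further note, as in the proof of that lemma, that the deficit is $\sum_{f\in H-T-G}\cap_T(c_f)=\cap_T(c_e)$, the number of tree-cycles (w.r.t.\ $T$) that intersect the tree-cycle $c_e$ induced by $e$; thus $\cap(H)-\cap(G)\ge 0$ with the gap governed precisely by how many other tree-cycles $c_e$ meets.
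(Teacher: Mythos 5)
Your proof is correct and follows exactly the paper's argument: since $e\in E-T$, the tree $T$ remains a spanning tree of $G$, so Lemma~\ref{lemma:intersection_subgraphs} applies directly with $T'=T$. Your additional remarks (why $e\notin T$ is essential, and the explicit deficit $\cap_T(c_e)$) are accurate elaborations of the same approach.
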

\begin{proof}
	Note that as $G$ is a subgraph of $H$ and $T$ is also a 
	spanning tree of $G$ the hypotheses of Lemma 
	\ref{lemma:intersection_subgraphs} are satisfied. 
	Consquently  $\cap(G) \leq \cap(H)$.
\end{proof}

\bigskip

Interesting in its own right is the reciprocal of this 
lemma described below.

\begin{lemma}\label{lemma:increasing_intersection_number}
	Let $G = (V,E)$ be a connected graph and 
	$H = (V, E \cup \{e\})$ a successor
	such that $\cap(H) < \cap(G)$ then for every spanning 
	tree $T$ of $H$ that is a solution of the MSTCI problem, 
	$e \in T$.
\end{lemma}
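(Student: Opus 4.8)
The plan is to derive this immediately as the contrapositive of Lemma~\ref{lemma:decreasing_intersection_number}. Suppose, for contradiction, that $T$ is a spanning tree of $H$ that solves the MSTCI problem but $e \notin T$. Then $T \subseteq E(H) - \{e\} = E$, and since $T$ spans $V$ it is a spanning tree of $G = (V, E)$; in particular $G$ is connected. Thus $G$ is exactly the predecessor of $H$ obtained by deleting the cycle-edge $e \in E(H) - T$.

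Now the hypotheses of Lemma~\ref{lemma:decreasing_intersection_number} are satisfied: $H$ is connected, $T$ is an MSTCI-solution spanning tree of $H$, and $e \in E(H) - T$. That lemma therefore gives $\cap(G) \le \cap(H)$. This contradicts the standing assumption $\cap(H) < \cap(G)$. Hence no such $T$ can avoid $e$, i.e.\ every MSTCI-solution spanning tree $T$ of $H$ contains $e$.

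I do not expect any real obstacle here, since all the work is already packaged in Lemma~\ref{lemma:decreasing_intersection_number} (which in turn rests on Lemma~\ref{lemma:intersection_subgraphs}). The only point worth stating carefully is the routine observation that $e \notin T$ forces $T$ to be a spanning tree of the subgraph $G = H - \{e\}$, so that the subgraph comparison applies; after that the contradiction is immediate.
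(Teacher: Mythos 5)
Your proof is correct and is exactly the intended argument: the paper states this lemma without proof, presenting it as the reciprocal of Lemma~\ref{lemma:decreasing_intersection_number}, and your contrapositive derivation (if $e\notin T$ then $T$ is a spanning tree of the connected predecessor $G$, so $\cap(G)\le\cap(H)$, a contradiction) is precisely the reasoning being left implicit. No gaps.
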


\bigskip

Based on the strong restriction imposed by Lemma 
\ref{lemma:increasing_intersection_number}, a natural aspect 
to consider is the relation of the intersection 
numbers of a graph and its successors. In other words, how 
frequent is that $\cap(G) > \cap(H)$ for a graph $G$ and 
every successor $H$?

\medskip

In the particular case that $G$ has a universal vertex the 
following lemma applies (the proof is identical to that of 
Corollary \ref{coro:complete_graph_maximum}),

\begin{lemma}
	Let $G=(V,E)$ be a connected graph with a universal 
	vertex then for every successor $H = (V, E \cup \{e\})$, 
	the following holds $\cap(G) \leq \cap(H)$.
\end{lemma}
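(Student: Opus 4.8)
The plan is to reduce this lemma to Corollary \ref{coro:complete_graph_maximum} exactly as the parenthetical remark suggests. Let $G=(V,E)$ be a connected graph with a universal vertex $u$, and let $H=(V,E\cup\{e\})$ be an arbitrary successor with $e=(v,w)\notin E$. Since $u$ is universal in $G$ and $G$ is a subgraph of $H$, the vertex $u$ remains universal in $H$. Hence by Theorem \ref{teo:universal_vertex} the star spanning tree $T_s$ centered at $u$ is a solution of the MSTCI problem with respect to $H$.

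The key step is to check the hypotheses of Lemma \ref{lemma:intersection_subgraphs} with the roles played by $G'=G$ and the ambient graph being $H$. First, $G$ is a subgraph of $H$ by construction. Second, $T_s$ is a solution of the MSTCI problem with respect to $H$, as just noted. Third, $T_s\cap G$ is again a star centered at $u$ spanning all of $V$ (removing the single extra edge $e$ from $H$ does not touch $T_s$, since $e\notin E$ means $e$ is not a tree-edge of $T_s$ unless $e$ is incident to $u$ — but even then $T_s\subseteq G$ because $u$ is universal in $G$, so every edge of $T_s$ already lies in $E$). Thus $T_s\cap G=T_s$ is a spanning tree of $G$, and Lemma \ref{lemma:intersection_subgraphs} yields $\cap(G)\le\cap(H)$.

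The only subtle point — and the one to state carefully rather than the real obstacle — is verifying that $T_s$ lies entirely inside $G$, so that $T_s\cap G$ is genuinely a spanning tree of $G$ and not a proper forest. This is immediate from universality of $u$ in $G$: the edges of $T_s$ are exactly $\{(u,x):x\in V-\{u\}\}$, all of which belong to $E$. Once this is observed, the conclusion is a direct application of Lemma \ref{lemma:intersection_subgraphs}, with no computation required; the argument is word-for-word the one used for Corollary \ref{coro:complete_graph_maximum}, simply replacing $K_n$ by the successor $H$.
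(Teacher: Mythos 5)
Your proposal is correct and follows exactly the route the paper intends: the paper states that the proof is identical to that of Corollary \ref{coro:complete_graph_maximum}, i.e., observe that the universal vertex of $G$ remains universal in the successor $H$, invoke Theorem \ref{teo:universal_vertex} to get that the star $T_s$ solves the MSTCI problem for $H$, note $T_s \cap G = T_s$ is a spanning tree of $G$, and apply Lemma \ref{lemma:intersection_subgraphs}. Your additional check that $T_s$ lies entirely in $G$ (so the intersection is genuinely a spanning tree) is a correct and worthwhile verification of the hypothesis the paper leaves implicit.
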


In the particular case that $m > n(n-2)/2$ the following 
stronger lemma expresses that the intersection number is 
strictly increasing with respect to the number of edges.

\begin{lemma}
	Let $G=(V,E)$ be a connected graph such that 
	$|E| = m > n(n-2)/2$ and $|V| = n \geq 4$, then for every 
	successor $H = (V, E \cup \{e\})$, the following holds 
	$\cap(G) < \cap(H)$.
\end{lemma}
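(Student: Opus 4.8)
The plan is to reduce the claim to the setting of graphs with a universal vertex, where all the explicit machinery (Theorem~\ref{teo:universal_vertex}, Lemma~\ref{lemma:star_formula}) is available, and then to verify a strict monotonicity of the intersection-number formula. First I would observe that by Lemma~\ref{lemma:dense_graphs}, since $m > n(n-2)/2$, the graph $G$ already contains a universal vertex $v$; moreover any successor $H = (V, E\cup\{e\})$ has $|E(H)| = m+1 > n(n-2)/2$ as well, so $H$ also contains a universal vertex. Hence both $G$ and $H$ admit star spanning trees, and by Corollary~\ref{coro:star_solves_mstci} and Lemma~\ref{lemma:star_formula} we may write
$$\cap(G) = \sum_{u \in V-\{v\}} \binom{d_G(u)-1}{2}, \qquad \cap(H) = \sum_{u \in V-\{v\}} \binom{d_H(u)-1}{2},$$
using the \emph{same} universal vertex $v$ for both. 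This step is safe because if $v$ is universal in $G$ it remains universal in $H$; one only needs that such a common $v$ exists, which follows from $H$ having a universal vertex and the star formula being valid for any choice of universal vertex.

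Next, the new edge $e = (a,b)$ has both endpoints distinct from $v$ (an edge incident to $v$ is already present in $G$ since $v$ is universal), so passing from $G$ to $H$ increases exactly the two degrees $d(a)$ and $d(b)$ by one and leaves all others fixed. Therefore
$$\cap(H) - \cap(G) = \left[\binom{d_G(a)}{2} - \binom{d_G(a)-1}{2}\right] + \left[\binom{d_G(b)}{2} - \binom{d_G(b)-1}{2}\right] = (d_G(a)-1) + (d_G(b)-1).$$
It now suffices to show this is strictly positive, i.e.\ that not both $a$ and $b$ have degree $1$ in $G$. I would argue this from the density hypothesis together with $n \geq 4$: a graph on $n$ vertices with more than $n(n-2)/2$ edges is too dense to contain a degree-one vertex at all when $n \geq 4$, since removing a pendant vertex would leave a graph on $n-1$ vertices with more than $n(n-2)/2 - 1$ edges, exceeding $\binom{n-1}{2}$ precisely when $n \geq 4$ — a contradiction. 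Hence $d_G(a), d_G(b) \geq 2$ and $\cap(H) - \cap(G) \geq 2 > 0$.

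The main obstacle is the final positivity step: one must be careful that the bound $d_G(a), d_G(b)\ge 2$ really does follow from $m > n(n-2)/2$ and $n \geq 4$, and it is worth double-checking the small boundary case $n=4$ (where $n(n-2)/2 = 4$, so $m \geq 5$, and indeed $K_4$ minus an edge still has minimum degree $2$). Everything else is bookkeeping: identifying the common universal vertex, confirming the star formula applies to both graphs, and computing the finite difference of the binomial sum. I would present the argument in the order above — density $\Rightarrow$ universal vertex in both graphs; star formula; finite-difference computation; positivity via the minimum-degree observation — so that each step cites only previously established results.
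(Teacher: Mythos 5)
Your proposal is correct and follows essentially the same route as the paper: density forces a common universal vertex, the star spanning tree solves MSTCI for both $G$ and $H$, and the increment $\cap(H)-\cap(G)$ equals $(d_G(a)-1)+(d_G(b)-1)$, which the paper packages as $\cap_{T_s^G}(c_e)$. Your positivity step is a slightly cleaner variant — you rule out any degree-one vertex by an edge count on $G$ minus a pendant vertex, whereas the paper bounds the total degree sum under the assumption that both endpoints of $e$ have degree one — but both arguments hinge on the same density threshold and yield the same conclusion for $n\geq 4$.
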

\begin{proof}
	By Lemma \ref{lemma:dense_graphs}, $G$ contains a 
	universal vertex, so the previous lemma guarantees that 
	$\cap(G) \leq \cap(H)$. To see that the inequality is 
	strict, note that both $G$ and $H$ admit star spanning 
	trees $T_s^G$ and $T_s^H$, resp., centered at the same node. 
	So Theorem \ref{teo:universal_vertex} guarantees that 

$$\cap(H) = \cap_H(T_s^H) = \cap_G(T_s^G) + \cap_{T_s^G}(c_e) 
= \cap(G) + \cap_{T_s^G}(c_e)$$

where $c_e$ is the tree-cycle induced by the edge $e=(u,v)$. 
So as to see that $\cap_{T_s^G}(c_e) > 0$ it is enough to check 
that $d(u) > 1$ or $d(v) > 1$ in $G$. Suppose that both 
$d(u) = d(v) = 1$ in $G$ and that the remaining edges have 
maximum degree: 1) the root of $T_s^G$ has degree n-1 and 2) 
the rest of the vertices have degree (n-3). Summing all,

$$m = \frac{1}{2} \sum_{v \in V} d(v) = 
\frac{1}{2} [(n-1) + (n-2) (n-3) + 2] = 
\frac{1}{2} [(n-2)^2 + 3]$$

By hypothesis $m > n(n-2)/2$, then

$$m = \frac{1}{2} [(n-2)^2 + 3] > \frac{1}{2} n(n-2)$$

which implies

$$(n-2)^2 + 3 > n(n-2) \implies - 2 (n-2) + 3 > 0 \implies 
n < \frac{7}{2}$$

\end{proof}

\subsection{Experimental analysis}

In order to measure the set of graphs that have greater 
intersection number than all its successors, we designed an 
experiment to exhaustively analyze connected graphs such 
that $5 \leq |V| \leq 8$. Table \ref{tab:small_inst} describes 
the results. 

\begin{table}[H]
	\caption{Small graphs analysis.}
	\centering
	\begin{tabular}{ccccc}
		\toprule
		$|V|$ & Non-isomorphic graphs & $\cap(G) > $   all successors\\
		\midrule
		5 & 21 & 0 \\
		6 & 112 & 0 \\
		7 & 853 & 0 \\
		8 & 11117 & 6 \\
		\bottomrule
		\label{tab:small_inst}
	\end{tabular}
\end{table}
 
Note that the set of graphs with intersection number greater 
than all its successors is relatively small. There are six 
cases for $8$-node graphs. These rare cases are described in 
Table \ref{tab:counterxample_description}. Below are 
some remarks about its columns,

\begin{itemize}
	\item The graphs are labeled incrementally 
	according to their number of edges. 
	\item The number of \emph{Succesors} of $G_i$ is 
	exactly the difference of edges between $K_8$ and 
	$G_i$, i.e.: $|E| + Succesors = 28$. Some may be isomorphic and 
	counted many times.
	\item The different intersection numbers of the 
	$Successors$ (in parenthesis in the last column) are less than 
	the number of $Successors$, this is 
	because: 1) the graphs induced by two successors 
	may be isomorphic or 2) non-isomorphic graphs 
	induced by two successors may have the same 
	intersection number.
\end{itemize}

% \begin{table}[H]
% 	\caption{8-node graph counterexamples description.}
% 	\centering
% 	\begin{tabular}{cccccc}
% 		\toprule
% 		Graph & $|E|$ & Total spanning trees & MSTCI solutions & Successors & $\cap(G)$ (and its successors) \\
% 		\midrule
% 		$G_1$ & 19 & 13125 & 72 & 9 & 46 (44, 45) \\
% 		$G_2$ & 20 & 21025 & 80 & 8 & 54 (52, 53) \\
% 		$G_3$ & 22 & 38416 & 6 & 6 & 60 (59) \\
% 		$G_4$ & 23 & 56448 & 16 & 5 & 69 (66, 68) \\
% 		$G_5$ & 23 & 58800 & 4 & 5 & 69 (66, 67) \\
% 		$G_6$ & 24 & 82944 & 48 & 4 & 82 (75) \\
% 		\bottomrule
% 		\label{tab:counterxample_description}
% 	\end{tabular}
% \end{table}

\begin{table}[H]
	\caption{8-node graph with intersection number greater 
	than all its successors.}
	\centering
	\begin{tabular}{cccccc}
		\toprule
		Graph & $|E|$ & Successors & $\cap(G)$ (and its successors) \\
		\midrule
		$G_1$ & 19 & 9 & 46 (44, 45) \\
		$G_2$ & 20 & 8 & 54 (52, 53) \\
		$G_3$ & 22 & 6 & 60 (59) \\
		$G_4$ & 23 & 5 & 69 (66, 68) \\
		$G_5$ & 23 & 5 & 69 (66, 67) \\
		$G_6$ & 24 & 4 & 82 (75) \\
		\bottomrule
		\label{tab:counterxample_description}
	\end{tabular}
\end{table}

Figure \ref{fig:counterxample_graphs} depicts these six 
graphs. Note that $G_6$ (at the 
bottom right) is the densest graph without a universal vertex, 
it is the unique $(n-2)$-regular graph, so all of its successors 
contain a universal vertex. This graph also achieves the 
maximum intersection number for $m = 24$ as mentioned in 
the previous section. 

\smallskip

At the moment, it is not clear if these graphs are related, 
except for the fact that they are all subgraphs of $G_6$.

\begin{figure}[H]
	\centering
	\includegraphics[scale = 0.34]{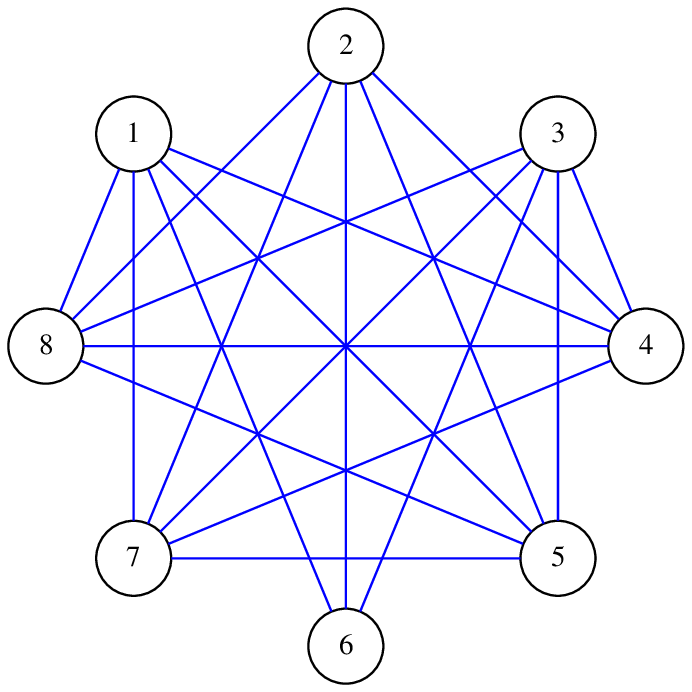}
	\includegraphics[scale = 0.34]{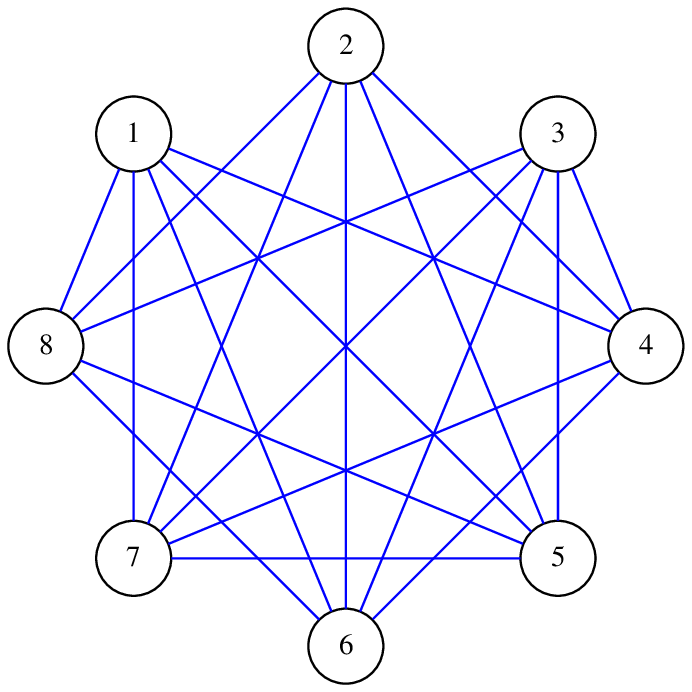}
	\includegraphics[scale = 0.34]{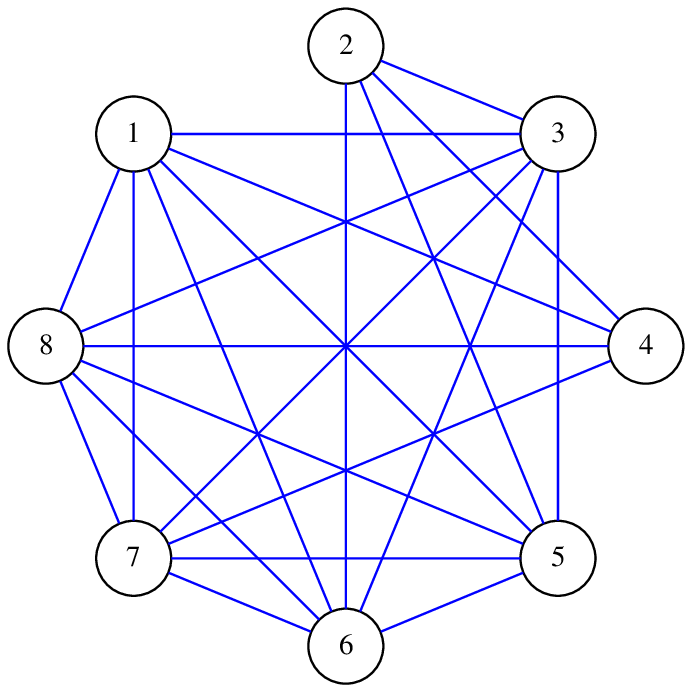} \\
	\includegraphics[scale = 0.34]{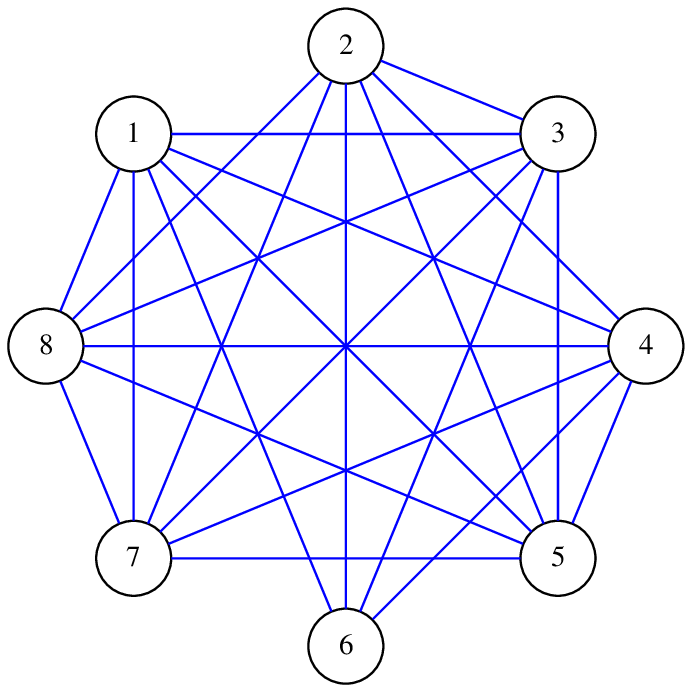}
	\includegraphics[scale = 0.34]{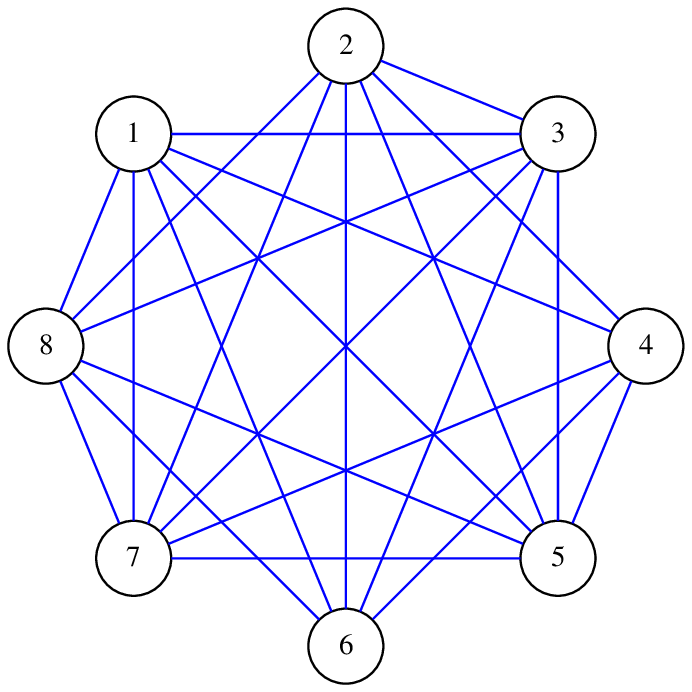}
	\includegraphics[scale = 0.34]{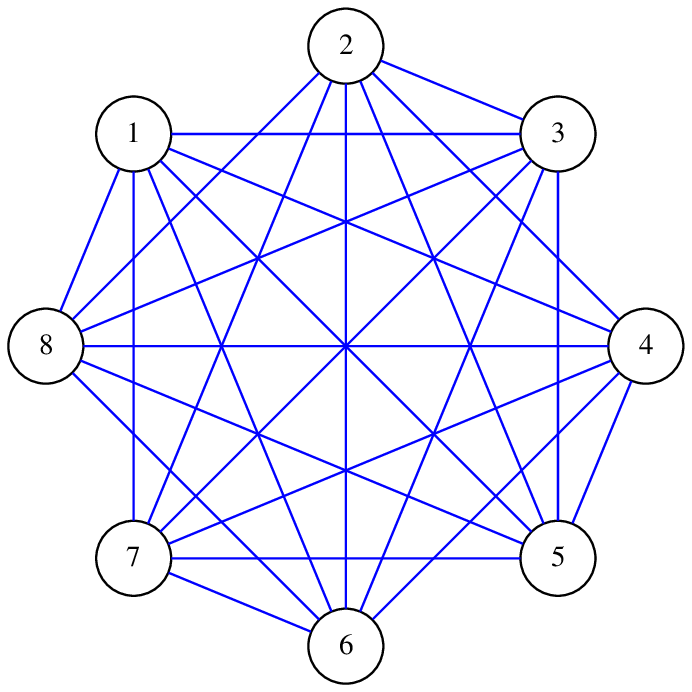}
	\caption{From top left to bottom right, 8-node graphs  
	$G_1$, $G_2$, $G_3$, $G_4$, $G_5$, $G_6$, 
	with intersection number greater than all its successors.}
	\label{fig:counterxample_graphs}
\end{figure}

\section{An attempt to generalize Theorem \ref{teo:universal_vertex}}

Theorem \ref{teo:universal_vertex} is the most general tool 
so far to understand the MSTCI problem. It would be important 
for theoretical and practical reasons to generalize it to 
an arbitrary graph. In this context we consider,

\begin{question}\label{question:degrees}
	Let $G=(V,E)$ be a connected graph, is it true that a spanning 
	tree that is a solution of the MSTCI problem always exists such 
	that $max-deg(G) = max-deg(T)$?
\end{question}

A positive answer to it would directly imply Theorem 
\ref{teo:universal_vertex}. In this section we analyze this 
problem.

\subsection{Experimental results}

As in the previous section, we exhaustively 
analyze connected graphs such that $5 \leq |V| \leq 8$. 
Table \ref{tab:small_inst2} describes the results. 
Clearly the implicit assertion of Question 
\ref{question:degrees} is false; the proportion of 
cases increases with the number of nodes.

\begin{table}[H]
	\caption{Small graphs analysis.}
	\centering
	\begin{tabular}{ccccc}
		\toprule
		$|V|$ & Non-isomorphic graphs & $max-deg(G) > max-deg(T)$ (for all MSTCI solution $T$) \\
		\midrule
		5 & 21 & 0 \\
		6 & 112 & 2 \\
		7 & 853 & 47 \\
		8 & 11117 & 1189 \\
		\bottomrule
		\label{tab:small_inst2}
	\end{tabular}
\end{table}

Figure \ref{fig:distinct_max_deg_hist} shows that the set 
of 8-node connected graphs and the set of 
cases for which the implicit assertion of Question 
\ref{question:degrees} does not hold have a similar 
distribution with 
respect to the number of edges. This suggests that the 
number of counterexamples is simply proportional and 
discourages further analysis.

\begin{figure}[H]
	\centering
	\includegraphics[scale = 0.37]{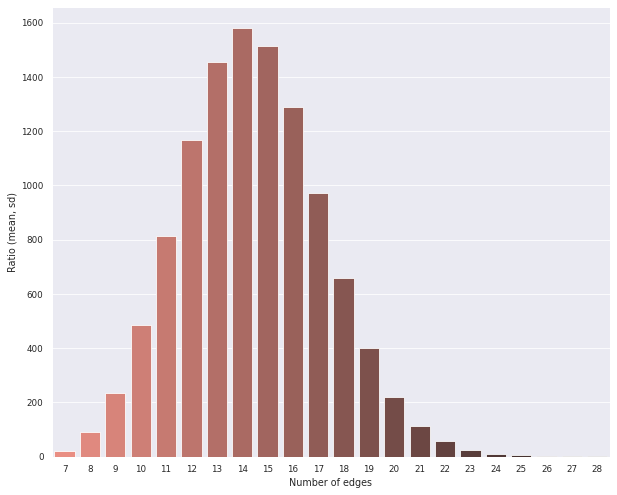}
	\includegraphics[scale = 0.37]{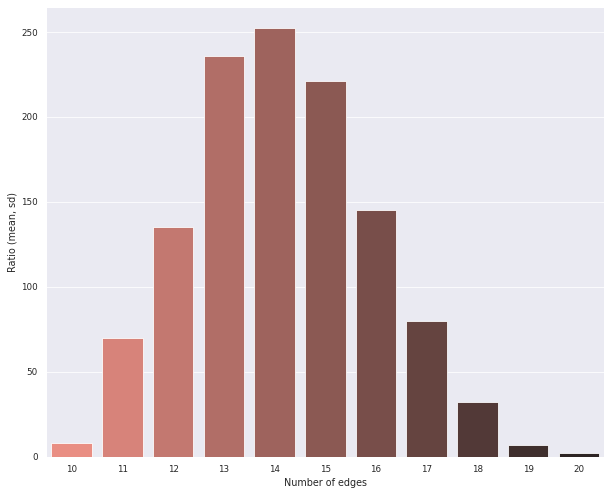}
	\caption{8-node connected graph histogram (left) and 
	counterexample histogram (right).}
	\label{fig:distinct_max_deg_hist}
\end{figure}

% \begin{algorithm}
%     \caption{CounterexampleSearch($n$)}
% 	\begin{algorithmic}
% 		\State $\mathscr{T} \gets GenerateAllTrees(n-1)$
% 		\ForEach {tree $T' \in \mathscr{T}$}
% 			\ForEach {node $w \in T'$}
% 				\State $G, \bar{\Delta} \gets GenerateGraph(w,T')$
% 				\ForEach {subset $\phi \subset \bar{\Delta}$}
% 					\State \textbf{check} $(IntersectionNumber(\phi, G) <$ 
% 					\State $StarIntersectionFormula(\phi, G))$
% 				\EndFor
% 			\EndFor
% 		\EndFor
% 	\end{algorithmic}
% 	\label{alg:alg1}
% \end{algorithm}

% \begin{table}
% 	\caption{Results for Small Instances}
% 	\centering
% 	\begin{tabular}{cr}
% 		\toprule
% 		Nodes & Instances (approx.) \\
% 		\midrule
% 		4 & 5 \\
% 		5 & 33 \\
% 		6 & 251 \\
% 		7 & 4200 \\
% 		8 & 125000 \\
% 		9 & 7900000 \\
% 		\bottomrule
% 		\label{tab:small-inst}
% 	\end{tabular}
% \end{table}

% \bigskip

% In order to generate all non-isomorphic trees of $|V|-1$ nodes, we used the 
% package \emph{nauty} \cite{McKay:2014}.

% \bigskip

\section{Technical details}

A brief summary of the technical details 
of the experiments is the following,

\begin{itemize}
	\item The set of connected graphs for a given number of 
	vertices where generated with the nauty \cite{McKay2014} software.
	\item The graph figures were rendered with graphviz \cite{Ellson2004}.
	\item The programs where coded with the tinygarden java package  
	\cite{Dubinsky2021-2}. 
	\item The figures were generated with googlecolab \cite{Bisong2019}.
\end{itemize}

\section{Conclusion} 

This article considers the MSTCI problem of arbitrary 
connected graphs. In this general setting we focused on 
three different aspects. The first, presents two lower bounds 
of the intersection number of a connected graph. The proof of 
the first lower bound ($l_{n,m}$) suggests 
two structural characteristics of the MSTCI 
problem, a solution to it should be the ``best possible 
combination" of the following conditions,

\begin{itemize}
	\item Tree-cycles should have short 
	length so that each cycle-edge belongs to 
	the least number of bonds.
	\item Cycle-edges should be 
	equidistributed among bonds.
\end{itemize}

The first condition resembles the fundamental cycle basis problem 
\cite{Kavitha:2009}, and the low-stretch spanning tree 
problem \cite{Alon1995}, \cite{Elkin2005}. These similarities define 
interesting directions of research. The second tight lower bound 
($\bar l_{n,n}$) is presented in a conjectural 
form (Conjecture \ref{conj:mu_equidistributed}). 
It is based on what we denoted $\mu$-regular graphs. 
These graphs verify the previous conditions and consequently 
reinforce their importance. This last bound shows that  
graphs with a universal vertex play a fundamental role, both 
because they are well understood 
cases and they provide examples of minimum intersection 
number. It also presents a global picture of the 
quadratic surface of the minimum intersection 
number as a function of the number of nodes 
and edges.

\bigskip

The second aspect of the article constitutes a first step 
towards understanding the relation of a connected graph $G$ and 
the set of graphs that contain one edge more than $G$, which we 
denoted its \emph{successors}. We showed that there is a 
strong restriction for a connected graph to have an 
intersection number greater than all of them. Experimental results 
showed that those graphs actually exist although they seem 
to be infrequent. The most notable example is the $8$-node 
$6$-regular graph. This fact seems to be general, and can be 
expressed in conjectural terms as: for $n$ even the 
$(n-2)$-regular graph has intersection number greater than 
its successor (note that in this case there is just one). 
These graphs are the densest without a 
universal vertex. They are good examples to 
investigate this singular behavior, 
and more generally they are a good starting point to 
understand relevant structural properties.

\bigskip

Theorem \ref{teo:universal_vertex} is the most important tool 
so far to understand the MSTCI problem. It implies that the densest 
cases are known (Lemma \ref{lemma:dense_graphs}) and 
that the efforts should concentrate in cases where 
$m \leq n(n-2)/2$. In the last part of the article we explore 
the possibility of a natural generalization of this theorem. More 
specifically, we considered the fact that if for every 
connected graph $G$ a solution $T$ containing a node of maximal 
degree should exist, i.e. $max-deg(G) = max-deg(T)$.
Experimental results determined a negative answer. 
Counterexamples have the same distribution with 
respect to the number edges for a fixed number of nodes than 
the set of connected graphs which indicates that they are simply 
proportional and consequently discourage any further analysis 
in this direction.

%\section{Acknowledgments} 

%The authors are grateful to the anonymous reviewers for their careful reading, constructive corrections and valuable comments on this paper, which have considerably improved its presentation.

%\bigskip

\bibliographystyle{elsarticle-num-names}
\bibliography{mstci}

\end{document}